\newtheorem{theorem}{Theorem}[section]
\newtheorem{lemma}{Lemma}[section]
\newtheorem{remark}{Remark}[section]
\newtheorem{example}{Example}
\def\CC{{\textmd \kern.24em \vrule width.02em height1.4ex depth-.05ex \kern-.26emC}}
\def\TagOnRight
\def\QQ{\rlap {\raise 0.4ex \hbox{$\scriptscriptstyle |$}} {\hskip -0.1em Q}}
\begin{document}
\begin{center}
 {{\bf \large {\rm {\bf Fractional Crank-Nicolson Galerkin finite element analysis for coupled time-fractional nonlocal parabolic problem }}}}
\end{center}
\begin{center}
	{\textmd {\bf Pari J. Kundaliya}}\footnote{\it Department of Mathematics,  Institute of Infrastructure, Technology, Research and Management, Ahmedabad, Gujarat, India, (pariben.kundaliya.pm@iitram.ac.in)}
%	
%	{\textmd {\bf Pari J. Kundaliya}\footnote{\it pariben.kundaliya.pm@iitram.ac.in} \\ \it Department of Mathematics,  Institute of Infrastructure, Technology, Research and Management, Ahmedabad, Gujarat, India}
\end{center}
%%%%%%%%%%%%%%%%%%%%%%%%%%%%%%%%%%%%%%page 1 %%%%%%%%%%%%%%%%%%%%%%%%%%%%%%%%%%%%%%%%%%%%%%%%%
\begin{abstract} In this article we propose a scheme for solving the coupled time-fractional nonlocal diffusion problem. The scheme consist of fractional Crank-Nicolson method with Galerkin finite element method (FEM) and Newton's method. We derive \emph{a priori} error estimates for fully-discrete solutions in $L^2$ and $H^1_0$ norms. Results based on the usual finite element method are provided to confirm the theoretical estimates.
\end{abstract}
%\begin{center}
{\bf Keywords:}  Nonlocal problem, Uniform mesh, Fractional Crank-Nicolson method, \indent \qquad \quad \, \quad Error estimate \\
{\bf AMS(MOS):} 65M12, 65M60, 35R11
%\end{center}
%\section{Introduction}
\section{Introduction}
Many real world problems are modeled more accurately by fractional differential equations than integer order differential equations \cite{[r5new]}. Fractional order partial differential equations (PDEs) have numerous applications in the fields of science and engineering, such as physics, chemistry, biology and finance \cite{[Podlubny],[Anatoly],[Bruce],[LI]}. In the literature, several numerical methods are available for time discretization of time fractional PDEs. Mainly, these methods can be classified as convolution quadrature (CQ) and $L1$ type schemes \cite{[hr12],[r02],[AAl2],[r11],[r8],[r5a],[cl1],[cl2],[mk]}. There are many real-life problems which have more than one unknown function. As a motivation, authors in \cite{[C.A.]} considered a mathematical model related to two species rabbits and foxes in an island. In this model, the rate of change of the population of one species is depend on the rate of change of the population of other species. In this article, we consider the following coupled time-fractional nonlocal diffusion equation with unknowns $u$ and $v.$\\ 
Find $u$ and $v$ such that
\begin{subequations}\label{3e4}
	\begin{align}
	\label{cuc3:1.1}
	^C{D}^{\alpha}_{t}u(x,t)-M_1(l(u),l(v))\Delta{u(x,t)} & = f_{1}(u,v) &  & \mbox{in}  \: \: \Omega\times(0,T],&\\
	\label{cuc3:1.2}
	^C{D}^{\alpha}_{t}v(x,t)-M_2(l(u),l(v))\Delta{v(x,t)} & = f_{2}(u,v) &  & \mbox{in} \: \: \Omega\times(0,T],&\\
	\label{cuc3:1.3}
	u(x,t) \, = \, v(x,t) & = 0 &  & \mbox{on} \: \: \partial{\Omega}\times(0,T],&\\
	\label{cuc3:1.4}
	u(x,0) \, = \, v(x,0) & = 0 & & \mbox{in} \: \: \Omega,&
	\end{align}
\end{subequations}
where ${\Omega}$ is a bounded domain in $\mathbb{R}^d$ $(d=1,2,3)$ with smooth boundary $\partial \Omega$, $ f_{1}(u,v)$ and $ f_{2}(u,v)$ represent the forcing terms,  $l(u) = \int_{\Omega} u \, dx$, $l(v) = \int_{\Omega} v \, dx,$ $^C{D}^{\alpha}_{t}$ represents the $\alpha ^{th}$ order Caputo fractional derivative \cite{[r1]}. Above problem \eqref{cuc3:1.1}-\eqref{cuc3:1.4} can be seen as a generalization of the integer order parabolic problem given in \cite{[me],[CNS]} to fractional order. Problems of this kind have applications in biology, where $u, \, v$ can represent the densities of two populations which are interacting through the nonlocal functions $M_1$ and $M_2$  \cite{[Alm],[JCMD]}. In recent years, many researchers have paid attention to the study of nonlocal PDEs \cite{[me],[sk],[ch],[r6new],[CNS],[sp1]}. In \cite{[me],[CNS]}, author solved coupled nonlocal parabolic problem using FEM. Also, for time discretization, the Backward Euler scheme \cite{[me]} and the Crank-Nicolson scheme \cite{[CNS]} are used. In \cite{[CNCD]}, coupled time fractional nonlinear diffusion system is solved using Galerkin finite element scheme along with fractional Crank-Nicolson method.
In \cite{[mn3]}, authors used FEM with the $L1$ scheme on uniform mesh to find the numerical solution of the time-fractional nonlocal PDE
\begin{subequations}
\begin{align}
^{C}{D}^{\alpha}_{t}u(x,t)-\nabla \Big( a \Big(\int_{\Omega} u \, dx\Big) \nabla u(x,t) \Big) & = f(x,t) &  & \mbox{in}  \: \:  \Omega \times (0,T],& \nonumber\\
u(x,t) & = 0 & & \mbox{on}  \: \: \partial \Omega \times (0,T],& \nonumber \\
u(x,0) & = u_0 & & \mbox{in} \: \: \Omega.& \nonumber 
\end{align}
\end{subequations}

This scheme gives $O(\tau^{2- \alpha})$ (where $\tau$ denotes the time step size) convergence in time. In this article, we use the Fractional Crank-Nicolson method on uniform mesh to discretize the Caputo fractional derivative which gives $O(\tau^2)$ convergence in temporal direction. This method is first proposed by Dimitrov \cite{[yd]} under compatibility conditions $u \in C^3[0,T]$ and $u(0)=u_t(0)=u_{tt}(0)=0$. Here also we assume the same conditions for unknown functions $u$ and $v$. To discretize the space variable, we use the finite element method with linear bases. To handle the nonlocal term and nonlinearity, Newton's method has been used.\\
Throughout the paper, $C>0$ denotes the generic constant independent of mesh parameters $h$ and $\tau.$ Let $(\cdot,\cdot)$ denotes the inner product and $ \|\cdot\|$ denotes the norm on space $L^2(\Omega).$  For $m \in \mathbb{N}$, $H^m(\Omega)$ represents the standard Sobolev space with the norm $\|\cdot\|_m$ and $H^1_0(\Omega) := \Big\{ w \in H^1(\Omega) \, : w \, = \, 0 \: \mbox{on} \: \partial \Omega   \Big\}$.\\  
The rest of the paper is organized as follows: In section 2, we write the weak formulation and fully-discrete formulation for the given problem \eqref{3e4}. We also derive a numerical scheme to solve \eqref{3e4}. The {\it{a priori}} bound and {\it{a priori}} error estimate for fully-discrete solutions are derived in section 3 and 4, respectively. Finally, the numerical experiments given in section 5 validate the theoretical findings. \\
\noindent For existence and uniqueness results as well as numerical analysis, we make the following hypotheses on the given problem data.
\begin{itemize}
	\item {H1:} $M_i:\mathbb{R}^2\rightarrow \mathbb{R}$ is bounded with $0<m_1\leq M_i(x,y)\leq m_2, \: x, \, y \in \mathbb{R}, \: i=1,2.$
	\item {H2:} $M_i:\mathbb{R}^2\rightarrow \mathbb{R}$ is Lipschitz continuous with Lipschitz constants $L'_i,K'_i>0.$ $$|M_i(x_1,y_1)-M_i(x_2,y_2)|\leq L'_i|x_1-x_2|+K'_i|y_1-y_2|,\quad x_1, \, x_2, \, y_1, \, y_2 \in \mathbb{R}, \quad i=1,2.$$
	\item {H3:} $f_{i}:\mathbb{R}^2 \rightarrow\mathbb{R}$ is Lipschitz continuous with Lipschitz constants $L_i,K_i>0.$ $$|f_i(x_1,y_1)-f_i(x_2,y_2)|\leq L_i \|x_1-x_2\|+K_i \|y_1-y_2\|, \quad x_1, \, x_2, \, y_1, \, y_2 \in \mathbb{R}, \quad i=1,2.$$
\end{itemize}  
\section{Fully-discrete formulation}
In this section, first we write the weak formulation of \eqref{3e4} and then discretize \eqref{3e4} in both space and time variable. The weak formulation of problem \eqref{3e4} is given below.\\
Find $u(\cdot,t), v(\cdot,t) \in  H^1_0(\Omega)$ for each $t \in(0,T]$ such that 
\begin{equation}\label{e3}
\begin{split}
(\, ^{C}{D}^{\alpha}_{t}u, w ) + M_1(l(u),l(v)) \, (\nabla u, \nabla w) \, =& \, \big(f_{1}(u,v), w \big), \quad  \forall w \in H^1_0(\Omega).\\
(\, ^{C}{D}^{\alpha}_{t}v, \omega ) + M_2(l(u),l(v)) \, (\nabla v, \nabla \omega) \, =& \, \big(f_{2}(u,v), \omega \big), \quad  \forall \omega \in H^1_0(\Omega).\\
u(x,0) = v(x,0) =& \, 0 \quad \mbox{in} \; \; \Omega.
\end{split}
\end{equation}
%\begin{theorem}\label{uniquet}
	By using Faedo-Galerkin method \cite{[mn3]}, one can prove under the hypotheses $H1$ and $H3$, the problem \eqref{3e4} has a unique solution $\{u, \, v\}$ for $0<\alpha<1.$ \\
%\end{theorem}
Now, to discretize equation \eqref{cuc3:1.1}-\eqref{cuc3:1.4} in space, we use finite element method. For that let $\Omega_h$ be a quasi uniform partition of $\Omega$ into disjoint intervals in $\mathbb{R}^1$ or triangles in $\mathbb{R}^2$ with step size $h$. Consider the $M$-dimensional subspace $X_h$ of $H^{1}_{0}(\Omega)$ such that
\begin{equation*}
X_h:=\Big\{w\in C^{0}(\bar{\Omega}): w_{|{\small T_{k}}}\in P_1(T_k), \: \forall \: T_{k}\in \Omega_h \: \, \mbox{and} \: \, w=0 \: \mbox{on} \: \partial \Omega\Big\}.
\end{equation*}
Now, we partitioned the time interval $[0, T]$ into $N$ number of sub-intervals using uniform step size $\tau = \frac{T}{N}$. Let $\tau_N := \big\{ t_n : t_n = n \tau, \: \mbox{for} \; n=0, 1,..., N\big\}$ be a partition of $[0, T]$. For each $n=1,2,\dots,N$, we denote $u(t_n)$ and $v(t_n)$ by $u^n$ and $v^n$, respectively. Let $U^n \approx u^n$ and $V^n \approx v^n$. Also, we set $U^{n, \alpha} = (1-\frac{\alpha}{2}) U^n + \frac{\alpha}{2} U^{n-1}$, $V^{n, \alpha} = (1-\frac{\alpha}{2}) V^n + \frac{\alpha}{2} V^{n-1}$.\\
To approximate the Caputo fractional derivative, we use the fractional Crank-Nicolson method.
We know that for any function $w,$ if $w(0)=0$ then $^CD^{\alpha}_{t_n} w = \, ^RD^{\alpha}_{t_n} w$, where $^RD^{\alpha}_{t_n} w$ is the $\alpha^{th}$ order Riemann-Liouville fractional derivative of $w$. Authors in \cite{[cnm]} derived the following approximation to $^RD^{\alpha}_{t_{n- \frac{\alpha}{2}}}w$ which gives $O(\tau^2)$ convergence in time.\\
\begin{equation}\label{3e1}
\begin{split}
^CD^{\alpha}_{t_{n-\frac{\alpha}{2}}} w \, = \, ^RD^{\alpha}_{t_{n-\frac{\alpha}{2}}} w \, & \approx \, D^{\alpha}_{\tau} w^{n} \, := \, \tau^{-\alpha} \, \sum_{j=0}^{n} b_{n-j} \, \phi^j, \quad n=1,2,\dots,N,
\end{split}
\end{equation}
where
\begin{equation}\label{3e2}
b_{k}= (-1)^k \, \frac{\Gamma( \alpha +1)}{\Gamma(k+1) \Gamma( \alpha-k+1)}.
\end{equation}
\begin{lemma}\label{3l2}
  \cite{[cnm],[yd],[scs],[CND]} If $w\in C^3[0,T]$ and $w(0)=0, \, w_{t}(0)=0, \, w_{tt}(0)=0,$ then the error $\| \, ^CD^{\alpha}_{t_{n-\frac{\alpha}{2}}} w - D^{\alpha}_{\tau} w^n \|$  satisfies
  \begin{equation}\label{d1trunction}
  \| \, ^CD^{\alpha}_{t_{n-\frac{\alpha}{2}}} w - D^{\alpha}_{\tau} w^n\| \leq C \tau^{2}.
  \end{equation}
\end{lemma}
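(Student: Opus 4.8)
The plan is to reduce the statement to a translation-invariant problem on the whole real line and then analyze the \emph{symbol} of the discrete operator through the generating function of the weights $b_k$ in \eqref{3e2}. First I would exploit the compatibility conditions $w(0)=w_t(0)=w_{tt}(0)=0$ to extend $w$ by zero to $t<0$ (and smoothly to compact support beyond $T$), obtaining $\tilde w\in C^2(\mathbb{R})$ whose third derivative is bounded with only a jump at $t=0$; it is precisely the three vanishing initial values that make the zero extension $C^2$ rather than merely continuous. Because $\tilde w$ vanishes for $t<0$, the finite quadrature sum $\tau^{-\alpha}\sum_{j=0}^{n}b_{n-j}w^{j}$ in \eqref{3e1} coincides with the full Grünwald--Letnikov convolution $\tau^{-\alpha}\sum_{k=0}^{\infty}b_k\,\tilde w(t_n-k\tau)$, so no boundary-truncation terms arise and Fourier analysis on $\mathbb{R}$ becomes available. (For the $L^2(\Omega)$-valued version of the estimate one applies the scalar argument pointwise in $x$, with $C$ controlled by the relevant norm of $\partial_t^3 w$.)

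The heart of the argument is the binomial identity $\sum_{k=0}^{\infty}b_k z^{k}=(1-z)^{\alpha}$, valid for the coefficients \eqref{3e2}. Taking the Fourier transform in $t$, the discrete operator acting on $\tilde w$ has symbol $\tau^{-\alpha}(1-e^{-i\omega\tau})^{\alpha}$, the Riemann--Liouville derivative ${}^{R}D^{\alpha}$ has symbol $(i\omega)^{\alpha}$, and the half-step shift to $t_{n-\frac{\alpha}{2}}=t_n-\frac{\alpha}{2}\tau$ contributes a phase $e^{-i\frac{\alpha}{2}\omega\tau}$. Writing $x=i\omega\tau$ and expanding $1-e^{-x}=x\bigl(1-\tfrac{x}{2}+O(x^2)\bigr)$ gives
\begin{equation*}
\tau^{-\alpha}(1-e^{-i\omega\tau})^{\alpha}=(i\omega)^{\alpha}\Bigl(1-\tfrac{\alpha}{2}\,i\omega\tau+O((\omega\tau)^{2})\Bigr),
\end{equation*}
and since $e^{-i\frac{\alpha}{2}\omega\tau}=1-\tfrac{\alpha}{2}i\omega\tau+O((\omega\tau)^{2})$, the two leading corrections agree. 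Hence the error symbol
\begin{equation*}
S(\omega):=\tau^{-\alpha}(1-e^{-i\omega\tau})^{\alpha}-e^{-i\frac{\alpha}{2}\omega\tau}(i\omega)^{\alpha}
\end{equation*}
obeys $|S(\omega)|\le C\,|\omega|^{2+\alpha}\tau^{2}$ whenever $|\omega\tau|\lesssim 1$. It is exactly the choice of evaluation point $t_{n-\frac{\alpha}{2}}$ (the shift by $\alpha/2$) that annihilates the $O(\tau)$ term and upgrades the classical first-order Grünwald formula to second order.

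To convert this into the bound \eqref{d1trunction}, I would represent the truncation error as $\frac{1}{2\pi}\int_{\mathbb{R}}e^{i\omega t_n}S(\omega)\,\widehat{\tilde w}(\omega)\,d\omega$ and split the integral at $|\omega|\sim\tau^{-1}$. The regularity of $\tilde w$ furnishes the decay $\widehat{\tilde w}(\omega)=O(|\omega|^{-4})$ (three integrations by parts, plus the extra $|\omega|^{-1}$ from the jump of $\tilde w'''$), and this is where the hypothesis $w\in C^{3}[0,T]$ together with the vanishing initial data is consumed. On the low-frequency part one uses $|S(\omega)|\le C\tau^{2}|\omega|^{2+\alpha}$, so the integrand is bounded by $C\tau^2|\omega|^{\alpha-2}$, which is integrable since $\alpha-2<-1$; on the high-frequency part one uses the crude bound $|S(\omega)|\le C(|\omega|^{\alpha}+\tau^{-\alpha})$, and the $|\omega|^{-4}$ decay turns both terms into contributions of size $\tau^{3-\alpha}\le C\tau^{2}$. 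The constraint $0<\alpha<1$ is what keeps both pieces $O(\tau^2)$.

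The main obstacle is the uniform-in-$\omega$ control of the symbol: the clean expansion of $S(\omega)$ is only the small-$\omega\tau$ behaviour, and one must separately treat the branch of $(1-z)^{\alpha}$ on the unit circle and verify that $\tau^{-\alpha}(1-e^{-i\omega\tau})^{\alpha}$ does not overshoot $(i\omega)^{\alpha}$ on the range $|\omega|\gtrsim\tau^{-1}$, so that the high-frequency tail of the integral carries no spurious growth. This frequency-splitting estimate is the only technical step; the remaining manipulations are routine. Since the sharp bound is already established in \cite{[cnm],[yd],[scs],[CND]}, I would cite those works for the delicate high-frequency analysis and present the symbol expansion above as the conceptual explanation of the $O(\tau^{2})$ rate.
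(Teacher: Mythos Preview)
The paper does not prove this lemma at all: it is stated with the citations \cite{[cnm],[yd],[scs],[CND]} and used as a black box. So there is no ``paper's proof'' to compare against; what you have written is an independent argument, and it is worth examining on its own merits.

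Your symbol/generating-function approach is a legitimate route and is in the spirit of Lubich's convolution-quadrature analysis and the Laplace-transform arguments in \cite{[cnm]}. The expansion showing that the $\alpha/2$-shift cancels the $O(\tau)$ term in the Gr\"unwald symbol is exactly the conceptual reason the scheme is second order, and you state it cleanly.

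There is, however, a genuine gap in the regularity bookkeeping. You claim $\widehat{\tilde w}(\omega)=O(|\omega|^{-4})$ from ``three integrations by parts, plus the extra $|\omega|^{-1}$ from the jump of $\tilde w'''$''. That extra factor is only available if $\tilde w'''$ is of bounded variation (equivalently, if you can integrate by parts once more), which needs $w\in C^{4}[0,T]$ or at least $w'''\in BV[0,T]$. Under the stated hypothesis $w\in C^{3}[0,T]$, three integrations by parts give only $\widehat{\tilde w}(\omega)=o(|\omega|^{-3})$ via Riemann--Lebesgue, with no quantitative rate. With $|\widehat{\tilde w}(\omega)|\le C|\omega|^{-3}$ your low-frequency integral becomes
\[
\int_{1}^{1/\tau}\tau^{2}\,|\omega|^{2+\alpha}\,|\omega|^{-3}\,d\omega
=\tau^{2}\int_{1}^{1/\tau}|\omega|^{\alpha-1}\,d\omega
=O(\tau^{2-\alpha}),
\]
which misses the target $O(\tau^{2})$. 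So the step where you say ``the integrand is bounded by $C\tau^{2}|\omega|^{\alpha-2}$, which is integrable'' does not follow from $w\in C^{3}$ alone. The obstacle you flag at the end (high-frequency control of the symbol) is actually routine; the real difficulty sits in the decay of $\widehat{\tilde w}$ on the low-frequency window.

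The proofs in the cited references sidestep this by working directly in the time domain: one writes $w(t)=\tfrac{1}{2}\int_{0}^{t}(t-s)^{2}w'''(s)\,ds$ using the three vanishing initial values, inserts this representation into both $^{C}D^{\alpha}_{t_{n-\alpha/2}}w$ and $D^{\alpha}_{\tau}w^{n}$, and estimates the resulting kernels against $\|w'''\|_{C[0,T]}$. That Peano-kernel route uses exactly $C^{3}$ and nothing more. If you want to keep your Fourier argument, either strengthen the hypothesis to $w'''\in BV[0,T]$ (which is harmless in applications) or replace the global Fourier estimate by the time-domain remainder analysis for the final step.
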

The fully-discrete scheme for \eqref{3e4} is: For each $n=1,2,\ldots,N$, find $U^{n}$ and $V^{n}\in X_h$ such that  
\begin{equation*}
\begin{split}
\big( \, ^CD^{\alpha}_{\tau}U^{n}, w\big) + M_1 \big(l(U^{n, \alpha}),l(V^{n, \alpha})\big) \big(\nabla U^{n, \alpha}, \nabla w \big) &= \big( f_{1} (U^{n, \alpha},V^{n, \alpha}), w \big), \quad \forall w \in X_{h},
\end{split}
\end{equation*}
\begin{equation} \label{fully discrete3}
\begin{split}
\big( \, ^CD^{\alpha}_{\tau}V^{n}, \omega \big) + M_2\big(l(U^{n, \alpha}),l(V^{n, \alpha})\big) \big( \nabla V^{n, \alpha}, \nabla \omega \big) &= \big( f_{2} (U^{n, \alpha},V^{n, \alpha}), \omega \big), \quad \forall \omega \in X_h,\\
U^0& =0,\quad V^0=0.\\
\end{split}
\end{equation}
Using the definition of $^CD^{\alpha}_{\tau},$ (\ref{fully discrete3}) can be rewrite as 
\begin{equation}\label{fully223}
\begin{split}
\tau^{- \alpha} \, b_0 (U^{n}, w ) + M_1 \big(l(U^{n, \alpha}),l(V^{n, \alpha})\big) (\nabla U^{n, \alpha}, \nabla w ) &= \big( f_{1} (U^{n, \alpha},V^{n, \alpha}), w \big)\\
&-\tau^{- \alpha} \, \sum_{j=1}^{n-1}b_{n-j} ( U^{n, \alpha}, w_h ),\\
\tau^{- \alpha} \, b_0 ( V^{n}, \omega ) + M_2 \big(l(U^{n, \alpha}),l(V^{n, \alpha})\big) (\nabla V^{n, \alpha}, \nabla \omega )&=\big( f_{2} (U^{n, \alpha}, V^{n, \alpha}), \omega \big)\\
&-\tau^{- \alpha} \, \sum_{j=1}^{n-1}b_{n-j} \big( V^{j},\omega \big).\\
\end{split}
\end{equation}
Let $\{\psi_i(x) \}_{1\leq i\leq M}$ be the $M$ dimensional basis of $X_h$ associated with nodes of $\Omega_h.$ Therefore, for $U^n, \, V^n \in X_h$, we can find some $\beta_{i}^n,\ \gamma_{i}^n \in \mathbb{R}$ such that
\begin{equation}\label{values3}
U^n = \sum_{i=1}^{M} \beta_{i}^n \psi_i, \quad   V^n = \sum_{i=1}^{M} \gamma_{i}^n \psi_i \ .
\end{equation}
Define $\beta^n$ := $[\beta_1^n, \beta_2^n,\dots,\beta_M^n]^{\prime}$ and $\gamma^n$ := $[\gamma_1^n, \gamma_2^n,\dots,\gamma_M^n]^{\prime}$.\\
Now, substituting the values of $U^n,\ V^n$ from \eqref{values3} into \eqref{fully223}, we obtain the nonlinear algebraic equations
\begin{equation}\label{newton3}
\begin{split}
G_{i}(\beta^n, \gamma^n)=& G_{i}(U^n, V^n)=0,\quad 1\leq i\leq M,\\
H_{i}(\beta^n, \gamma^n)=& H_{i}(U^n, V^n)=0,\quad 1\leq i\leq M,\\
\end{split}
\end{equation}
where
\begin{equation}
\begin{split}
G_{i}(U^n, V^n)=\tau^{- \alpha} \, b_0 ( U^{n},\psi_i )  + \tau^{- \alpha} \, \sum_{j=1}^{n-1} b_{n-j} &( U^{j},\psi_i ) - \big(f_{1} (U^{n, \alpha}, V^{n, \alpha}), \psi_i \big)\\
&+ M_1 \big(l(U^{n, \alpha}),l(V^{n, \alpha})\big) ( \nabla U^{n, \alpha}, \nabla \psi_i ),\\
H_{i}( U^n, V^n)=\tau^{- \alpha} \, b_0 ( V^{n},\psi_i )  +\tau^{- \alpha} \, \sum_{j=1}^{n-1} b_{n-j} &( V^{j},\psi_i) - \big( f_{2}(U^{n, \alpha},V^{n, \alpha}), \psi_i\big)\\
&+ M_2 \big(l(U^{n, \alpha}),l(V^{n, \alpha})\big) ( \nabla V^{n, \alpha}, \nabla \psi_i ).\\
\end{split}
\end{equation}
If we apply Newton's method to solve \eqref{newton3}, we get the Jacobian matrix $J_1$ as follows:
\begin{equation}
\begin{split}
J_1 =\left[\begin{array}{cc} A & B \\
C & D \end{array} \right] ,
\end{split}
\end{equation}
where the elements of the matrices $A,$ $B,$ $C$ and $D$ take the form
\begin{equation}\label{3e5}
\begin{split}
(A)_{ki} \,= \, \frac{\partial G_{i}}{\partial \beta_{k}^{n}} = \, &\tau^{- \alpha} \, b_0 ( \psi_k,\psi_i ) + \Big(1-\frac{\alpha}{2}\Big) \, M_1 \big(l(U^{n, \alpha}),l(V^{n, \alpha})\big) ( \nabla \psi_k , \nabla \psi_i ) \qquad \qquad \qquad \\
&+ \Big(1-\frac{\alpha}{2}\Big) \Bigg(\frac{\partial M_1 \big(l(U^{n, \alpha}),l(V^{n, \alpha})\big)} {\partial l(U^{n})}\Bigg) \Bigg(\int_{\Omega} \psi_k \, dx \Bigg) ( \nabla U^{n, \alpha}, \nabla \psi_i ) \\ 
&- \Big(1-\frac{\alpha}{2}\Big) \, \Bigg( \frac{\partial f_{1}(U^{n},V^{n})}{\partial U^{n}} \psi_k, \psi_i\Bigg), \\
\end{split}
\end{equation}
\begin{equation}\label{3e6}
\begin{split}
(B)_{ki}= \frac{\partial G_{i}}{\partial \gamma_{k}^{n}} = \, &\Big(1-\frac{\alpha}{2}\Big) \Bigg(\frac{\partial M_1 \big(l(U^{n, \alpha}),l(V^{n, \alpha})\big)}{\partial l(V^{n})}\Bigg) \Bigg( \int_{\Omega} \psi_k \, dx\Bigg) ( \nabla U^{n, \alpha}, \nabla \psi_i ) \quad \qquad \qquad \qquad \qquad \\
&- \Big(1-\frac{\alpha}{2}\Big) \Bigg( \frac{\partial f_{1}(U^{n},V^{n})}{\partial V^{n}} \psi_k, \psi_i \Bigg),\\
\end{split}
\end{equation}
\begin{equation}\label{3e7}
\begin{split}
(C)_{pi}=\frac{\partial H_{i}}{\partial \beta_{p}^{n}} = \, &\Big(1-\frac{\alpha}{2}\Big) \Bigg( \frac{\partial M_2 \big(l(U^{n, \alpha}),l(V^{n, \alpha})\big)}{\partial l(U^{n})}\Bigg) \Bigg(\int_{\Omega} \psi_p \, dx \Bigg) ( \nabla V^{n, \alpha}, \nabla \psi_i ) \quad \qquad \qquad \qquad \qquad \\
&- \Big(1-\frac{\alpha}{2}\Big) \Bigg( \frac{\partial f_{2}(U^{n},V^{n})}{\partial U^{n}}\psi_p, \psi_i\Bigg),\\
\end{split}
\end{equation}
\begin{equation}\label{3e8}
\begin{split}
(D)_{pi} \, = \, \frac{\partial H_{i}}{\partial \gamma_{p}^{n}} = \, &\tau^{- \alpha} \, b_0 ( \psi_p,\psi_i )  + M_2 \big(l(U^{n, \alpha}),l(V^{n, \alpha})\big) ( \nabla \psi_p, \nabla \psi_i ) \qquad \qquad \qquad \qquad \qquad \\
&+ \Big(1-\frac{\alpha}{2}\Big) \Bigg(\frac{\partial M_2 \big(l(U^{n, \alpha}),l(V^{n, \alpha})\big)}{\partial l(V^{n})}\Bigg) \Bigg( \int_{\Omega} \psi_p \, dx \Bigg) ( \nabla V^{n, \alpha}, \nabla \psi_i ) \\
&-\Big(1-\frac{\alpha}{2}\Big) \Bigg( \frac{\partial f_{2}(U^{n},V^{n})}{\partial V^{n}} \psi_p, \psi_i \Bigg),\\
\end{split}
\end{equation}
where  $1\leq i,\ k,\ p\leq M.$ From equations \eqref{3e5}-\eqref{3e8}, we can observe that none of the matrices $A,$ $B,$ $C,$ $D$ are sparse and therefore Jacobian matrix $J_1$ is not sparse \cite{[me],[sk],[CNS],[sp1]}. We follow the idea given in \cite{[me],[sk],[CNS],[sp1]} to overcome above issue of sparsity. This idea was first proposed in \cite{[r3]} to solve nonlocal elliptic boundary value problem. The modified problem is defined as follows: \\
    Find $d_1,d_2 \in \mathbb{R}$ and $U^n,V^n\in X_h$ such that
  \begin{equation}\label{cuf6}
   \begin{split}
   &l(U^{n, \alpha})-d_1=0, \qquad l(V^{n, \alpha})-d_2=0,\\
   &\tau^{- \alpha} \, b_0 ( U^n, \psi_i ) + M_1 (d_1,d_2) ( \nabla U^{n, \alpha}, \nabla \psi_i ) - \big( f_{1} (U^{n, \alpha},V^{n, \alpha}), \psi_i\big) \\
   & \hspace{250 pt} + \tau^{- \alpha} \, \sum_{j=0}^{n-1} b_{n-j} ( U^{j}, \psi_i ) = 0,\\
   &\tau^{- \alpha} \, b_0 ( V^n, \psi_i ) + M_2 (d_1,d_2) ( \nabla V^{n, \alpha}, \nabla \psi_i ) - \big( f_{2}(U^{n, \alpha},V^{n, \alpha}), \psi_i \big) \\
   & \hspace{250 pt}  + \tau^{- \alpha} \, \sum_{j=0}^{n-1} b_{n-j} ( V^{j}, \psi_i) =0.
   \end{split}
    \end{equation}
    Note that if $(d_1,d_2,U^n,V^n)$ is the solution of the problem \eqref{cuf6}, then $\{U^n,V^n\}$ is the solution of the problem \eqref{fully223} and the converse is also true \cite{[sp1]}.\\
Now, to solve equation \eqref{cuf6} by using Newton's method, we rewrite \eqref{cuf6} as follows:
\begin{equation}\label{3e9}
\begin{split}
G_{i}(U^n, V^n, d_1, d_2)=0,\quad 1\leq i\leq M+1,\\
H_{i}(U^n, V^n, d_1, d_2)=0,\quad 1\leq i\leq M+1,\\
\end{split}
\end{equation}
where
\begin{equation}\label{3e10}
\begin{split}
G_{i}(U^n, V^n, d_1, d_2) \,= \, &\tau^{- \alpha} \, b_0 ( U^{n},\psi_i )  + \tau^{- \alpha} \, \sum_{j=1}^{n-1} b_{n-j} ( U^{j},\psi_i ) - \big(f_{1} (U^{n, \alpha}, V^{n, \alpha}), \psi_i \big)\\
&+ M_1 (d_1, d_2) ( \nabla U^{n, \alpha}, \nabla \psi_i ), \quad \mbox{for} \; 1 \le i \le M,\\
G_{(M+1)}(U^n, V^n, d_1, d_2&) \,= \, l(U^{n, \alpha})-d_1=0,\\
H_{i}(U^n, V^n, d_1, d_2) \, = \, &\tau^{- \alpha} \, b_0 ( V^{n},\psi_i )  +\tau^{- \alpha} \, \sum_{j=1}^{n-1} b_{n-j} ( V^{j},\psi_i) - \big( f_{2}(U^{n, \alpha},V^{n, \alpha}), \psi_i\big)\\
&+ M_2 (d_1, d_2) ( \nabla V^{n, \alpha}, \nabla \psi_i ), \quad \mbox{for} \; 1 \le i \le M,\\ 
H_{(M+1)}(U^n, V^n, d_1, d_2&) \,= \, l(V^{n, \alpha})-d_2=0.
\end{split}
\end{equation}
Now, applying Newton's method to the system of equations \eqref{3e9}, we get the following matrix equation:
\begin{gather}\label{3e58}
J
\begin{bmatrix}
{\beta^n} \\ {\gamma^n} \\ \beta \\ \gamma
\end{bmatrix}
=
\begin{bmatrix}
A_1 & B_1 & C_1 & D_1 \\ A_2 & B_2 & C_2 & D_2 \\ A_3 & B_3 & C_3 & D_3 \\ A_4 & B_4 & C_4 & D_4
\end{bmatrix}
\begin{bmatrix}
{\beta^n} \\ {\gamma^n} \\ \beta \\ \gamma
\end{bmatrix}
=
\begin{bmatrix}
\bar{G} \\ \bar{H} \\ G_{(M+1)} \\ H_{(M+1)}
\end{bmatrix},
\end{gather}
where $J$ denotes the Jacobian matrix, ${\beta^n} \, = \, [ \beta^n_1, \beta^n_2,...,\, \beta^n_M]',$ ${\gamma^n} \, = \, [ \gamma^n_1, \gamma^n_2,...,\, \gamma^n_M]',$ $\bar{G} \, = \, [G_1, G_2,..., G_M ]',$ $\bar{H} \, = \, [H_1, H_2,..., H_M ]'$ and entries of matrices $A_r,$ $B_r,$ $C_r,$ $D_r,$ $(r = 1,2,3,4)$ are given below. For $1 \le i, k \le M,$
\begin{equation}\label{3e11}
\begin{split}
&(A_1)_{ik} =  \tau^{- \alpha} \, b_0 ( \psi_k,\psi_i ) + \Big(1- \frac{\alpha}{2} \Big) \Bigg\{ M_1(d_1,d_2) ( \nabla \psi_k , \nabla \psi_i ) - \Bigg( \frac{\partial f_{1}(U^{n},V^{n})}{\partial U^{n}} \psi_k, \psi_i\Bigg)\Bigg\},\\
%\end{split}
%\end{equation}
%\begin{equation}\label{3e12}
%\begin{split}
&(B_1)_{ik} = - \Big(1- \frac{\alpha}{2} \Big) \Bigg( \frac{\partial f_{1}(U^{n},V^{n})}{\partial V^{n}} \psi_k, \psi_i \Bigg),\\
%\end{split}
%\end{equation}
%\begin{equation}\label{3e13}
%\begin{split}
&(C_1)_{i1}= \frac{\partial M_{1}}{\partial d_1} (d_1, d_2) \, (\nabla U^{n, \alpha}, \nabla \psi_i), \qquad (D_1)_{i1} \, = \frac{\partial M_{1}}{\partial d_2} (d_1, d_2) \, (\nabla U^{n, \alpha}, \nabla \psi_i).\\
%\end{split}
%\end{equation}
%\begin{equation}\label{3e14}
%\begin{split}
%&(D1)_{i1} \, = \frac{\partial a_{1}}{\partial d_2} (d_1, d_2) \, (\nabla U^{n, \alpha}, \nabla \psi_i)\\
\end{split}
\end{equation}
\begin{equation}\label{3e13}
\begin{split}
&(A_2)_{ik} = - \Big(1- \frac{\alpha}{2} \Big) \Bigg( \frac{\partial f_{2}(U^{n},V^{n})}{\partial U^{n}} \psi_k, \psi_i \Bigg),\\
%\end{split}
%\end{equation}
%\begin{equation}\label{3e11}
%\begin{split}
&(B_2)_{ik} =  \tau^{- \alpha} \, b_0 ( \psi_k,\psi_i ) + \Big(1- \frac{\alpha}{2} \Big) \Bigg\{ \! \! M_2(d_1,d_2) ( \nabla \psi_k , \nabla \psi_i ) \! - \! \Bigg( \! \frac{\partial f_{2}(U^{n},V^{n})}{\partial V^{n}} \psi_k, \psi_i\Bigg)\Bigg\},\\
%\end{split}
%\end{equation}
%\begin{equation}\label{3e13}
%\begin{split}
&(C_2)_{i1}= \frac{\partial M_{2}}{\partial d_1} (d_1, d_2) \, (\nabla V^{n, \alpha}, \nabla \psi_i), \qquad (D_2)_{i1} \, = \frac{\partial M_{2}}{\partial d_2} (d_1, d_2) \, (\nabla V^{n, \alpha}, \nabla \psi_i).\\
%\end{split}
%\end{equation}
%\begin{equation}\label{3e14}
%\begin{split}
%&(D2)_{i1} \, = \frac{\partial a_{2}}{\partial d_2} (d_1, d_2) \, (\nabla V^{n, \alpha}, \nabla \psi_i)\\
\end{split}
\end{equation}
\begin{equation}\label{3e15}
\begin{split}
&(A_3)_{1k} \, = \Big(1- \frac{\alpha}{2} \Big) \int_{\Omega} \psi_k \, dx, \qquad (B_3)_{1k} \, = 0, \qquad (C_3)_{11} \, = -1, \qquad (D_3)_{11} \, = 0.\\
%\end{split}
%\end{equation}
%\begin{equation}\label{3e16}
%\begin{split}
%&(B3)_{1k} \, = 0\\
%&(C3)_{11} \, = -1 \qquad (D3)_{11} \, = 0\\
%&(D3)_{11} \, = 0\\
\end{split}
\end{equation}
\begin{equation}\label{3e16}
\begin{split}
&(A_4)_{1k} \, = 0, \qquad (B_4)_{1k} \, = \Big(1- \frac{\alpha}{2} \Big) \int_{\Omega} \psi_k \, dx, \qquad (C_4)_{11} \, = 0, \qquad (D_4)_{11} \, = -1.\\
%\end{split}
%\end{equation}
%\begin{equation}\label{3e16}
%\begin{split}
%&(B4)_{1k} \, = 0\\
%&(C4)_{11} \, = -1 \qquad (D3)_{11} \, = 0\\
%&(D4)_{11} \, = 0\\
\end{split}
\end{equation}
%%%%%%%%%%%%%%%%%%%%%%%%%%%%%%%%%%%%%%%%%%%
From equations \eqref{3e11}-\eqref{3e16}, it can be seen that $A_1,$ $B_1,$ $A_2,$ $B_2$ are sparse matrices \cite{[CNS],[CNCD],[sp1]} and hence $J$ is a sparse matrix. 
\section{\textit{A priori} bound}
In this section, we provide {\it{a priori}} bound for the fully-discrete scheme \eqref{fully discrete3}. For this we need following Lemma. 
\begin{lemma}\label{3l1}
  \cite{[CND]} For any function $ \phi(\cdot, t)$ defined on $\tau_N$, one has
	\begin{equation}
	   \frac{1}{2} \,  ^CD^{\alpha}_{\tau} \| \phi^n \|^2 \, \le \, \big( \, ^CD^{\alpha}_{\tau} \phi^{n}, \, \phi^{n, \alpha} \big), \\
	\end{equation}
	where $\phi^{n, \alpha} := (1- \frac{\alpha}{2}) \phi^{n} + \frac{\alpha}{2}\phi^{n-1},$ \ for $n = 1,2,\dots,N.$
\end{lemma}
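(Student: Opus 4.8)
The plan is to multiply the claimed inequality by $2\tau^{\alpha}>0$ and reduce it to showing that the symmetric form
\[
Q_n := 2\sum_{j=0}^{n} b_{n-j}\,(\phi^j,\phi^{n,\alpha}) - \sum_{j=0}^{n} b_{n-j}\,\|\phi^j\|^2
\]
is nonnegative, since by the definition of $^CD^{\alpha}_{\tau}$ in \eqref{3e1} one has $^CD^{\alpha}_{\tau}\|\phi^n\|^2 = \tau^{-\alpha}\sum_{j=0}^{n} b_{n-j}\|\phi^j\|^2$ and $(\,^CD^{\alpha}_{\tau}\phi^n,\phi^{n,\alpha}) = \tau^{-\alpha}\sum_{j=0}^{n} b_{n-j}(\phi^j,\phi^{n,\alpha})$. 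Everything then rests on the sign structure of the coefficients $b_k$ from \eqref{3e2}: for $0<\alpha<1$ one checks $b_0=1$, $b_1=-\alpha$, and $b_k<0$ for all $k\ge 1$, while the generating-function identity $\sum_{k=0}^{\infty}b_k=(1-1)^{\alpha}=0$ gives $\sum_{k=1}^{\infty}|b_k|=b_0=1$ and hence $S:=\sum_{k=2}^{n}|b_k|\le \sum_{k=2}^{\infty}|b_k|=1-\alpha$. I would establish these elementary facts first.

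Next I would separate the two most recent levels $j=n$ and $j=n-1$ from the rest. For each $j\le n-2$ the coefficient $b_{n-j}=-|b_{n-j}|$ is negative, so using the polarization $2(\phi^j,\phi^{n,\alpha})-\|\phi^j\|^2 = \|\phi^{n,\alpha}\|^2 - \|\phi^j-\phi^{n,\alpha}\|^2$ these terms contribute $\sum_{j=0}^{n-2}|b_{n-j}|\,\|\phi^j-\phi^{n,\alpha}\|^2 - S\,\|\phi^{n,\alpha}\|^2$. Inserting $b_0=1$, $b_1=-\alpha$ and $\phi^{n,\alpha}=(1-\tfrac{\alpha}{2})\phi^n+\tfrac{\alpha}{2}\phi^{n-1}$, the $j=n$ and $j=n-1$ contributions collapse (this is the one computation I would carry out in full) to the clean identity
\[
2(\phi^n,\phi^{n,\alpha})-\|\phi^n\|^2-\alpha\big[2(\phi^{n-1},\phi^{n,\alpha})-\|\phi^{n-1}\|^2\big] = (1-\alpha)\,\|\phi^{n,\alpha}\|^2 + \tfrac{\alpha(1-\alpha)(4-\alpha)}{4}\,\|\phi^n-\phi^{n-1}\|^2 .
\]
Combining the two pieces yields the decomposition
\[
Q_n = \sum_{j=0}^{n-2}|b_{n-j}|\,\|\phi^j-\phi^{n,\alpha}\|^2 + (1-\alpha-S)\,\|\phi^{n,\alpha}\|^2 + \tfrac{\alpha(1-\alpha)(4-\alpha)}{4}\,\|\phi^n-\phi^{n-1}\|^2 ,
\]
in which every term is nonnegative: the first sum trivially, the middle term because $S\le 1-\alpha$, and the last because $0<\alpha<1$. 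This gives $Q_n\ge 0$ and hence the lemma, uniformly in $n\ge 1$ (for $n=1$ the first sum is empty and $S=0$).

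The main obstacle is the middle identity for the two most recent levels: one must verify that the coefficients of $\|\phi^n\|^2$, $\|\phi^{n-1}\|^2$ and $(\phi^n,\phi^{n-1})$ on the left match exactly those on the right, which forces the single value $a=\tfrac{\alpha(1-\alpha)(4-\alpha)}{4}$ and, crucially, makes the residual a perfect square $\|\phi^n-\phi^{n-1}\|^2$ rather than an indefinite cross term. Equivalently, this is the assertion that the $2\times2$ quadratic form left after eliminating the older levels is positive semidefinite, with its determinant vanishing precisely at the extreme value $S=1-\alpha$; pairing the sharp tail bound $S\le 1-\alpha$ with this borderline block is exactly what makes the argument close. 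Since the only structural inputs are the sign pattern $b_0=1>0$, $b_k<0\ (k\ge1)$ and the summation identity $\sum_k b_k=0$, the estimate holds for all admissible $n$ and all $0<\alpha<1$.
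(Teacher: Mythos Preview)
The paper does not prove this lemma; it is quoted from \cite{[CND]} without argument, so there is no in-paper proof to compare against. Your proposal is a correct, self-contained proof.

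Your algebraic identity for the two most recent levels checks out: writing $a=\phi^n$, $b=\phi^{n-1}$, $c=\phi^{n,\alpha}$, one computes
\[
2(a,c)-\|a\|^2-\alpha\bigl[2(b,c)-\|b\|^2\bigr]=(1-\alpha)\bigl[\|a\|^2-\alpha(a,b)+\alpha\|b\|^2\bigr],
\]
and expanding the right-hand side $(1-\alpha)\|c\|^2+\tfrac{\alpha(1-\alpha)(4-\alpha)}{4}\|a-b\|^2$ gives the same quadratic form, so the decomposition of $Q_n$ is exactly as you wrote. The sign facts you use are the standard ones for the Gr\"unwald weights with exponent $\alpha\in(0,1)$: $b_0=1$, $b_k<0$ for $k\ge1$, and $\sum_{k\ge0}b_k=0$, whence $S=\sum_{k=2}^{n}|b_k|\le 1-\alpha$. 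All three pieces of $Q_n$ are therefore nonnegative and the inequality follows. The $n=1$ case is covered as you note (empty tail sum, $S=0$). One cosmetic point: in your opening line, ``multiply the claimed inequality by $2\tau^{\alpha}$'' should strictly read ``multiply both sides by $2\tau^{\alpha}$ and rearrange,'' since you are showing the difference is nonnegative rather than scaling an already-known inequality.
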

For derivation of {\it{a priori}} bound and error estimate, we also use following Discrete fractional Gr\"onwall type inequality.
\begin{lemma}\label{Dong}
	\cite{[CND]} Suppose the nonnegative sequences $\{\omega^n , \phi^n |    n=0,1,2,\dots\}$ satisfy
	\begin{equation*}
	^CD^{\alpha}_{\tau} \omega^n \leq \lambda_1 \omega^n + \lambda_2 \omega^{n-1} + \phi^n, \quad n\geq 1,\\
	\end{equation*}
	where $\lambda_1$ and $\lambda_2$ are nonnegative constants. Then there exists a positive constant ${\tau}^\star$ such that when $\tau \leq {\tau}^\star$,
	\begin{equation*}
	\omega^n \leq 2\Big(\omega^0 + \frac{t_{n}^{\alpha}}{\Gamma(1+\alpha)} \max_{0 \leq j \leq n} \phi^{j} \Big) E_{\alpha}(2\lambda t_{n}^{\alpha}), \quad 1\leq n\leq N,
	\end{equation*}
	where $E_{\alpha}(z)$ is the Mittag-Leffler function and $\lambda=\lambda_1+\frac{\lambda_2}{2-2^{(1-\alpha)}}.$
\end{lemma}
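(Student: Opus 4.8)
The plan is to regard the hypothesis as a discrete fractional differential inequality and to dominate its solution by a discrete Mittag--Leffler sequence, exploiting the sign pattern of the Crank--Nicolson weights. First I would record the elementary properties of the convolution kernel $\{b_k\}$ from \eqref{3e2}: since $\sum_{k\ge 0} b_k z^k = (1-z)^{\alpha}$, for $0<\alpha<1$ one has $b_0 = 1$, $b_k < 0$ for $k \ge 1$, and $\sum_{k\ge 0} b_k = 0$. The discrete operator $^CD^{\alpha}_{\tau}$ is therefore a convolution that, in generating-function language, corresponds to multiplication by $\tau^{-\alpha}(1-z)^{\alpha}$.

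Next I would introduce the complementary (discrete fractional integration) kernel $\{P_k\}$, defined as the convolution inverse of $\{\tau^{-\alpha} b_k\}$; this corresponds to $\tau^{\alpha}(1-z)^{-\alpha}$, so that $P_k = \tau^{\alpha}\,\Gamma(k+\alpha)/(\Gamma(\alpha)\Gamma(k+1)) \ge 0$. The two facts I need are the nonnegativity of each $P_k$ and the summation bound $\sum_{k=0}^{n-1} P_k \le t_n^{\alpha}/\Gamma(1+\alpha)$, which follows from the identity $\sum_{k=0}^{n-1}\Gamma(k+\alpha)/(\Gamma(\alpha)\Gamma(k+1)) = \Gamma(n+\alpha)/(\Gamma(n)\Gamma(1+\alpha))$ together with the Gautschi-type estimate $\Gamma(n+\alpha)/\Gamma(n) \le n^{\alpha}$ and $t_n = n\tau$.

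Then I would apply the convolution with $\{P_k\}$ to both sides of the hypothesis. Because $\{P_k\}$ inverts $\{\tau^{-\alpha}b_k\}$ and all sequences in sight are nonnegative, this converts the differential inequality into the integral inequality $\omega^n \le \omega^0 + \sum_{j=1}^{n} P_{n-j}\,(\lambda_1 \omega^j + \lambda_2 \omega^{j-1}) + \sum_{j=1}^{n} P_{n-j}\,\phi^j$, accounting for the initial term. The forcing part is immediately controlled by $(\max_{0\le j \le n}\phi^j)\sum_{k=0}^{n-1}P_k \le (t_n^{\alpha}/\Gamma(1+\alpha))\max_{0\le j\le n}\phi^j$, which produces the bracketed factor of the claim. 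The coupling $\lambda_1\omega^j + \lambda_2\omega^{j-1}$ is where the constant $\lambda = \lambda_1 + \lambda_2/(2-2^{1-\alpha})$ enters: reindexing the $\omega^{j-1}$ sum and comparing the consecutive kernel weights $P_{n-j}$ and $P_{n-j+1}$ lets one replace the two-term coupling by a single effective rate $\lambda$ acting on $\{\omega^j\}$, with $2-2^{1-\alpha}$ being the sharp constant in that weight comparison.

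Finally, under the step-size restriction $\tau \le \tau^{\star}$ the implicit coefficient $1 - \lambda_1 P_0 = 1 - \lambda_1\tau^{\alpha}$ stays bounded away from zero, so the $j=n$ term can be absorbed into the left-hand side at the cost of the overall factor $2$ and the doubling of the rate to $2\lambda$. What remains is a convolution Gr\"onwall inequality $\omega^n \le \Phi + 2\lambda \sum_{j} P_{n-j}\omega^j$ with $\Phi = \omega^0 + (t_n^{\alpha}/\Gamma(1+\alpha))\max_{0\le j\le n}\phi^j$, and I would close it by comparison: defining the discrete sequence that solves the corresponding fractional difference equation with rate $2\lambda$ and matching its iterates term-by-term against the Mittag--Leffler series $E_{\alpha}(2\lambda t_n^{\alpha}) = \sum_{m\ge 0}(2\lambda t_n^{\alpha})^m/\Gamma(1+m\alpha)$, an induction on $n$ yields $\omega^n \le 2\Phi\,E_{\alpha}(2\lambda t_n^{\alpha})$. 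I expect the main obstacle to be precisely this last comparison, namely showing that the discrete convolution iterates are dominated term-by-term by the continuous Mittag--Leffler series with the stated constants; the bookkeeping for the Crank--Nicolson shift that produces the sharp constant $2-2^{1-\alpha}$ is the other delicate point.
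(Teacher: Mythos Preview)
The paper does not actually prove this lemma: it is stated with a citation to \cite{[CND]} and used as a black box, so there is no ``paper's own proof'' to compare against. Your outline is therefore not comparable with anything in the present paper.

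That said, your strategy is essentially the standard route used in the literature (e.g.\ Jin--Li--Zhou and the cited Kumar--Chaudhary--Srinivas Kumar paper) for discrete fractional Gr\"onwall inequalities of this type: pass to the complementary convolution kernel $P_k$ coming from $(1-z)^{-\alpha}$, bound $\sum_{k<n}P_k$ by $t_n^{\alpha}/\Gamma(1+\alpha)$, absorb the top term under a smallness condition on $\tau$, and close by induction against the Mittag--Leffler series. The one place where your sketch is vague is the derivation of the precise constant $2-2^{1-\alpha}$ attached to $\lambda_2$; this does not come merely from a ratio $P_{k+1}/P_k$ comparison but from an estimate specific to the Gr\"unwald--Letnikov weights (essentially a bound on how the shifted sum $\sum_j P_{n-j}\omega^{j-1}$ compares with $\sum_j P_{n-j}\omega^{j}$ through the identity $\sum_{j=0}^{n} b_j = \binom{n-\alpha}{n}$ and the fact that $1 - 2^{-\alpha} \cdot 2 = 2 - 2^{1-\alpha}$ arises from the first two weights). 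If you want to write a complete proof you will need to make that step explicit; the rest of the argument is sound.
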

\begin{theorem}\label{stability1 theorem3}
	Let $(U^n,V^n)$ (for $1 \le n \le N$) be the solution of $(\ref{fully discrete3})$.  Then there exists a positive constant  ${\tau}^{*}$ (independent of $h$) such that when $\tau\leq {\tau}^{*},$ $U^n, \, V^n$ satisfy
	\begin{equation}
	\|U^n\|+\|V^n\|\leq C,
	\end{equation}
		\begin{equation}
	\|\nabla U^n\|+\|\nabla V^n\|\leq C,
	\end{equation}
\end{theorem}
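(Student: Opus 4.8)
The plan is to establish the two estimates in sequence: first the $L^2$ bound, and then, using it as an ingredient, the $H^1_0$ bound, in both cases reducing the argument to the discrete fractional Gr\"onwall inequality of Lemma \ref{Dong}. The time-step restriction $\tau \le \tau^*$ will enter precisely through that lemma.

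For the $L^2$ estimate I would take the test functions $w = U^{n,\alpha}$ and $\omega = V^{n,\alpha}$ in the two equations of \eqref{fully discrete3}. Lemma \ref{3l1} converts the first inner product into the lower bound $\tfrac12\,{}^CD^{\alpha}_{\tau}\|U^n\|^2$, while hypothesis H1 gives $M_1(\cdot)\,\|\nabla U^{n,\alpha}\|^2 \ge m_1\|\nabla U^{n,\alpha}\|^2 \ge 0$, so the diffusion term is simply discarded (here there is no sign difficulty, since the coefficient multiplies a nonnegative quantity that we throw away). The forcing term I would control by H3 and the triangle inequality as $\|f_1(U^{n,\alpha},V^{n,\alpha})\| \le C\big(1+\|U^{n,\alpha}\|+\|V^{n,\alpha}\|\big)$, followed by Cauchy--Schwarz, Young's inequality, and the averaging identity $U^{n,\alpha}=(1-\tfrac{\alpha}{2})U^n+\tfrac{\alpha}{2}U^{n-1}$ to reach $\|U^{n,\alpha}\|^2 \le C(\|U^n\|^2+\|U^{n-1}\|^2)$. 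Adding the $U$- and $V$-inequalities and setting $\omega^n := \|U^n\|^2+\|V^n\|^2$, I obtain a relation of the form ${}^CD^{\alpha}_{\tau}\omega^n \le \lambda_1\omega^n+\lambda_2\omega^{n-1}+C$. Since $U^0=V^0=0$ forces $\omega^0=0$, Lemma \ref{Dong} together with the boundedness of $E_\alpha$ on $[0,2\lambda T^\alpha]$ yields $\omega^n\le C$, i.e.\ the first claim.

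For the $H^1_0$ estimate I would instead test with $w={}^CD^{\alpha}_{\tau}U^n$ and $\omega={}^CD^{\alpha}_{\tau}V^n$. The time term becomes $\|{}^CD^{\alpha}_{\tau}U^n\|^2$, and the forcing term is estimated by $\tfrac12\|f_1\|^2+\tfrac12\|{}^CD^{\alpha}_{\tau}U^n\|^2$, whose first summand is already bounded by the $L^2$ result just proved; absorbing the second into the left-hand side leaves $M_1(\cdot)\,(\nabla U^{n,\alpha},\,{}^CD^{\alpha}_{\tau}\nabla U^n)\le C$. Because ${}^CD^{\alpha}_{\tau}$ is a finite linear combination of nodal values it commutes with $\nabla$, so Lemma \ref{3l1} applied componentwise to $\nabla U$ and summed gives $(\nabla U^{n,\alpha},\,{}^CD^{\alpha}_{\tau}\nabla U^n)\ge \tfrac12\,{}^CD^{\alpha}_{\tau}\|\nabla U^n\|^2$, and after multiplying by the positive scalar $M_1(\cdot)$ this produces $M_1(\cdot)\cdot\tfrac12\,{}^CD^{\alpha}_{\tau}\|\nabla U^n\|^2\le C$.

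The main obstacle is exactly this last step: $M_1(\cdot)$ is a nonconstant positive factor and ${}^CD^{\alpha}_{\tau}\|\nabla U^n\|^2$ need not have a fixed sign, so one cannot simply replace $M_1(\cdot)$ by its lower bound $m_1$. I would resolve it by a case distinction. If ${}^CD^{\alpha}_{\tau}\|\nabla U^n\|^2\ge 0$, then $m_1\le M_1(\cdot)$ gives $\tfrac12\,{}^CD^{\alpha}_{\tau}\|\nabla U^n\|^2\le C/m_1$; if ${}^CD^{\alpha}_{\tau}\|\nabla U^n\|^2<0$, the same bound holds trivially. In either case ${}^CD^{\alpha}_{\tau}\|\nabla U^n\|^2\le C$, and applying Lemma \ref{Dong} with $\lambda_1=\lambda_2=0$ (so $E_\alpha(0)=1$) together with $\|\nabla U^0\|=0$ gives $\|\nabla U^n\|^2\le C\,t_n^{\alpha}/\Gamma(1+\alpha)\le C$. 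Running the identical argument for $V^n$ and adding the two bounds completes the proof, with $\tau^*$ the step-size threshold inherited from the $L^2$ application of Lemma \ref{Dong}.
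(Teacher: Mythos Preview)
Your argument is correct. The $L^2$ part coincides with the paper's proof essentially line by line: test with $U^{n,\alpha}$, $V^{n,\alpha}$, drop the nonnegative diffusion term via H1, bound $f_i$ through H3, invoke Lemma~\ref{3l1}, sum, and apply Lemma~\ref{Dong}.

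The $H^1_0$ part, however, differs from the paper in two respects. First, to cope with the nonconstant factor $M_1\big(l(U^{n,\alpha}),l(V^{n,\alpha})\big)$ in front of $(\nabla U^{n,\alpha},\nabla\,{}^CD^{\alpha}_{\tau}U^n)$, the paper simply \emph{divides the whole equation} by this positive scalar before estimating; after division, Lemma~\ref{3l1} applies directly to the bare term $(\nabla U^{n,\alpha},\nabla\,{}^CD^{\alpha}_{\tau}U^n)$ with no coefficient in the way. Your sign-based case distinction achieves the same end and is valid, but it is unnecessary: once you have $M_1(\cdot)\cdot\tfrac12\,{}^CD^{\alpha}_{\tau}\|\nabla U^n\|^2\le C$, dividing by the positive number $M_1(\cdot)$ and then using $M_1(\cdot)\ge m_1$ on the (positive) right-hand side already gives the uniform bound. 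Second, rather than feeding the previously established $L^2$ bound into $\|f_1\|$, the paper keeps $\|f_1(U^{n,\alpha},V^{n,\alpha})\|^2\le C(1+\|U^{n,\alpha}\|^2+\|V^{n,\alpha}\|^2)$, applies Poincar\'e to convert the $L^2$ norms to gradient norms, and closes with Lemma~\ref{Dong} using nonzero $\lambda_1,\lambda_2$. Your route bounds $\|f_1\|$ by a constant and then invokes Lemma~\ref{Dong} with $\lambda_1=\lambda_2=0$, which makes that final step trivial but ties the $H^1_0$ estimate to the $L^2$ one. Both routes are legitimate; the paper's is slightly more self-contained and handles the variable coefficient more cleanly, while yours has a simpler Gr\"onwall application at the end.
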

\begin{proof}
	For $\forall w\in X_{h},$ from (\ref{fully discrete3}), we have
	\begin{equation}\label{3e18}
	\big( \, ^CD^{\alpha}_{\tau}U^{n}, w \big)+ M_1\big(l(U^{n, \alpha}),l(V^{n, \alpha})\big) (\nabla U^{n, \alpha}, \nabla w) = \big( f_{1}(U^{n, \alpha},V^{n, \alpha}), w \big).\\
	\end{equation}
	Choosing  $w=U^{n, \alpha}$ and then using the Cauchy-Schwarz inequality along with the inequality $ab \le \frac{1}{2} a^2 + \frac{1}{2} b^2$, we can obtain
	\begin{equation} \label{stb13}
	\big( \, ^CD^{\alpha}_{\tau}U^{n}, U^{n, \alpha} \big) + M_1 \big(l(U^{n, \alpha}),l(V^{n, \alpha})\big) \|\nabla U^{n, \alpha}\|^2\leq\frac{1}{2}\big(\|f_{1}(U^{n, \alpha},V^{n, \alpha})\|^2+\| U^{n, \alpha}\|^2\big).
	\end{equation}
	Since $f_{1}$ is Lipschitz continuous, we have
	\begin{equation*}
	\begin{split}
	\Big|\| {f_{1}(U^{n, \alpha}, V^{n, \alpha})} \| - \| {f_{1}(0, 0)} \| \Big| \, &\le \| {f_{1}(U^{n, \alpha}, V^{n, \alpha})-f_{1}(0, 0)} \| \\
	&\le \, L_1 \, \|{U^{n, \alpha}} \| + K_1 \, \|{V^{n, \alpha}}\|.
	\end{split}
	\end{equation*}
	Therefore,
	\begin{equation}\label{3e17}
	\begin{split}
	 \| {f_1(U^{n, \alpha}, V^{n, \alpha})} \| \, \le \, \| {f_1(0,0)} \| +  L_1 \, \|{U^{n, \alpha}} \| + K_1 \, \|{V^{n, \alpha}}\| \, \le \,  C \, \big(1+\|{U^{n, \alpha}} \|+\|{V^{n, \alpha}} \|\big).\\
	\end{split}
	\end{equation}
%	where $C = \max \big\{\| {f_1(0,0)} \|, \, L_1, \, K_1\big\}.$\\
	By using the bound of $M_1$ and from equation \eqref{3e17}, we can write equation \eqref{stb13} as
	\begin{equation}\label{dp3}
	\begin{split}
	\big( \, ^CD^{\alpha}_{\tau}U^{n}, U^{n, \alpha}\big)+ m_1 \,  \|\nabla U^{n, \alpha}\|^2
	&\leq C \big((1+\| U^{n, \alpha}\|+\| V^{n, \alpha}\|)^2+\| U^{n, \alpha}\|^2 \big),\\
	( \, ^CD^{\alpha}_{\tau}U^{n, \alpha}, U^{n, \alpha}) &\leq C \big( 1+\| U^{n, \alpha}\|^2+\| V^{n, \alpha}\|^2 \big).\\
	\end{split}
	\end{equation}
	An application of Lemma \ref{3l1} in \eqref{dp3}, gives
	\begin{equation} \label{stb233}
	{^CD}^{\alpha}_{\tau}\|U^{n}\|^2\leq C\big(1+\| U^{n, \alpha}\|^2+\| V^{n, \alpha}\|^2\big).\\
	\end{equation}
	Similarly, we can get the estimate for $V^{n}$ as follows:
	\begin{equation} \label{stb3}
	{^CD}^{\alpha}_{\tau}\|V^{n}\|^2\leq C\big(1+\| V^{n, \alpha}\|^2+\| U^{n, \alpha}\|^2\big).\\
	\end{equation}
	Adding \eqref{stb233} and \eqref{stb3}, we get
	\begin{equation}\label{stb3a}
	{^CD}^{\alpha}_{\tau}(\|U^{n}\|^2+\|V^{n}\|^2)\leq C\big(1+\| U^{n, \alpha}\|^2+\| V^{n, \alpha}\|^2\big).\\
	\end{equation}
	Using Lemma $\ref{Dong}$ in \eqref{stb3a}, we can arrive at
	\begin{equation} \label{stb43}
	\|U^{n}\|^2+\|V^{n}\|^2\leq C.\\
	\end{equation}
	For $a,b\geq 0,$ using $\frac{1}{2}(a+b)^2\leq a^2+b^2$ in \eqref{stb43}, we obtain
	\begin{equation*}
	\|U^{n}\|+\|V^{n}\|\leq C.\\
	\end{equation*}
	Now, we choose $w=\ ^CD^{\alpha}_{\tau}U^{n}$ and then use Cauchy-Schwarz inequality in \eqref{3e18} to obtain
	\begin{equation} \label{3e19}
	\begin{split}
	\| \, ^CD^{\alpha}_{\tau} U^{n}\|^2 + M_1 \big(l(U^{n, \alpha}),l(V^{n, \alpha})\big) \big( \nabla U^{n, \alpha}, \nabla \, ^CD^{\alpha}_{\tau}U^{n} \big) \leq & \|f_{1}(U^{n, \alpha},V^{n, \alpha})\| \| \,  ^CD^{\alpha}_{\tau}U^{n}\|.\\
	\end{split}
	\end{equation}
	We divide both sides of \eqref{3e19} by $M_1 \big(l(U^{n, \alpha}),l(V^{n, \alpha})\big)$ and then use bound of $M_1$ to get
	\begin{equation} \label{stb133}
	\frac{1}{m_2} \| \, ^CD^{\alpha}_{\tau}U^{n}\|^2+(\nabla U^{n, \alpha},\nabla \,  ^CD^{\alpha}_{\tau}U^{n}) \leq \frac{1}{m_1} \big( \|f_{1}(U^{n, \alpha},V^{n, \alpha})\|  \| ^CD^{\alpha}_{\tau} U^{n}\| \big).
	\end{equation}
	An application of inequality $ab \le \frac{\epsilon}{2} a^2 + \frac{1}{2 \epsilon} b^2$ $\big(\mbox{with} \; \epsilon = m_2\big)$ gives
	\begin{equation} \label{3e20}
	\frac{1}{m_2} \| \, ^CD^{\alpha}_{\tau}U^{n}\|^2 + (\nabla U^{n, \alpha}, \nabla \,  ^CD^{\alpha}_{\tau}U^{n}) \leq \frac{m_2}{2 m_1^2} \, \|f_{1}(U^{n, \alpha},V^{n, \alpha})\|^2 + \frac{1}{2 m_2} \| \, ^CD^{\alpha}_{\tau} U^{n}\|.
	\end{equation}
Using Lemma \ref{3l1} and \eqref{3e17} with Poincar\'e inequality in \eqref{3e20}, we can arrive at
\begin{equation}\label{dp33}
^CD^{\alpha}_{\tau} \|\nabla U^{n}\|^2
	\leq C\big(1+\|\nabla U^{n, \alpha}\|^2+\|\nabla V^{n, \alpha}\|^2\big).\\
\end{equation}
	Similarly, we get an estimate for $V^{n}$ as
	\begin{equation} \label{stb33}
	{^CD}^{\alpha}_{\tau} \|\nabla V^{n}\|^2 \leq C \big(1+\|\nabla V^{n, \alpha}\|^2+\|\nabla U^{n, \alpha}\|^2\big).\\
	\end{equation}
On adding \eqref{dp33} and \eqref{stb33}, we get
	\begin{equation}\label{3e21}
	{^CD}^{\alpha}_{\tau} (\|\nabla U^{n}\|^2+\|\nabla V^{n}\|^2) \leq C \big(1+\|\nabla U^{n, \alpha}\|^2+\|\nabla V^{n, \alpha}\|^2\big).\\
	\end{equation}
	An application of Lemma $\ref{Dong}$ to \eqref{3e21} gives required result
	\begin{equation*}
	\|\nabla U^{n}\|+\|\nabla V^{n}\|\leq C.\\
	\end{equation*}
	This completes the proof. 
\end{proof}
\begin{theorem} 
	Let $U^0,\ U^1,\dots, U^{n-1}$ and  $V^0,\ V^1,\dots, V^{n-1}$ are given. Then there exists a positive constant $\tau^{*}$ (independent of $h$) such that when $\tau\leq \tau^{*},$ the problem \eqref{fully discrete3} has a unique solution $U^n, \, V^n \in X_h,$ for all $1\leq n\leq N$.
\end{theorem}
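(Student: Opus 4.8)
The plan is to reduce the claim to a statement about zeros of a single continuous map on the finite-dimensional space $X_h\times X_h$, then settle existence by a Brouwer-type argument and uniqueness by exploiting the dominance of the $\tau^{-\alpha}$ term. Since $U^0,\dots,U^{n-1}$ and $V^0,\dots,V^{n-1}$ are given, the history sums in \eqref{fully223} are fixed data, so I would define a map $\Phi:X_h\times X_h\to X_h\times X_h$ by requiring, for all $(w,\omega)\in X_h\times X_h$,
\begin{align*}
(\Phi(U,V),(w,\omega)) ={}& \tau^{-\alpha}b_0(U,w)+M_1(l(U^{n,\alpha}),l(V^{n,\alpha}))(\nabla U^{n,\alpha},\nabla w) \\
&-(f_1(U^{n,\alpha},V^{n,\alpha}),w)+\tau^{-\alpha}\sum_{j=1}^{n-1}b_{n-j}(U^j,w),
\end{align*}
plus the analogous $V$-contribution tested against $\omega$. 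A zero of $\Phi$ is exactly a solution of \eqref{fully discrete3}, and continuity of $\Phi$ is immediate from H1--H3.

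For existence, I would invoke the standard consequence of Brouwer's fixed point theorem: a continuous map on $\mathbb{R}^{2M}$ satisfying $(\Phi(\xi),\xi)\ge 0$ on a sphere $\|\xi\|=R$ must vanish somewhere inside the ball. Testing $\Phi$ with $(U^{n,\alpha},V^{n,\alpha})$ and repeating the energy computation from Theorem \ref{stability1 theorem3} (using $(U,U^{n,\alpha})=(1-\tfrac{\alpha}{2})\|U\|^2+\tfrac{\alpha}{2}(U,U^{n-1})$, the lower bound $M_i\ge m_1$, the bound \eqref{3e17} on $f_1$, and Cauchy--Schwarz on the history terms), the leading term $\tau^{-\alpha}b_0(1-\tfrac{\alpha}{2})(\|U\|^2+\|V\|^2)$ controls the remaining forcing and history contributions, which grow at most linearly in $(U,V)$. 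Hence $(\Phi(U,V),(U^{n,\alpha},V^{n,\alpha}))>0$ once $\|U\|^2+\|V\|^2$ is large enough, and a solution exists.

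For uniqueness, suppose $(U_1,V_1)$ and $(U_2,V_2)$ both solve \eqref{fully discrete3}; set $e_U=U_1-U_2$, $e_V=V_1-V_2$, so that $U_1^{n,\alpha}-U_2^{n,\alpha}=(1-\tfrac{\alpha}{2})e_U$. Subtracting the two $U$-equations and testing with $w=U_1^{n,\alpha}-U_2^{n,\alpha}$, I would split the $M_1$-term as $M_1(l(U_1^{n,\alpha}),l(V_1^{n,\alpha}))(\nabla(U_1^{n,\alpha}-U_2^{n,\alpha}),\nabla w)$ plus a Lipschitz remainder of the form $[M_1(\cdot)-M_1(\cdot)](\nabla U_2^{n,\alpha},\nabla w)$. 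The first piece equals $M_1\|\nabla w\|^2\ge m_1\|\nabla w\|^2$ and is kept on the left; the remainder is bounded using H2, the estimate $|l(\phi)|\le C\|\phi\|$, and the a priori $H^1_0$ bound $\|\nabla U_2^{n,\alpha}\|\le C$ from Theorem \ref{stability1 theorem3}, and then absorbed by Young's inequality into $\tfrac{m_1}{2}\|\nabla w\|^2$; the $f_1$ difference is handled by H3. Doing the same for $V$ and adding, I expect to reach $[\tau^{-\alpha}b_0(1-\tfrac{\alpha}{2})-C](\|e_U\|^2+\|e_V\|^2)\le 0$, so choosing $\tau^{*}$ with $\tau^{-\alpha}(1-\tfrac{\alpha}{2})>C$ forces $e_U=e_V=0$.

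The main obstacle is precisely this Lipschitz remainder in the uniqueness step: it is multiplied by $\|\nabla w\|=(1-\tfrac{\alpha}{2})\|\nabla e_U\|$, a gradient norm not controlled by the $\tau^{-\alpha}\|e_U\|^2$ term arising from the discrete time derivative. The resolution is to refrain from discarding the coercive $m_1\|\nabla w\|^2$ term and to use the a priori $H^1_0$ bound on the second solution to convert the remainder into an absorbable quantity, after which the growing $\tau^{-\alpha}$ factor closes the estimate. Note that, unlike the a priori bound, this uniqueness argument is genuinely single-step and therefore requires no discrete Gr\"onwall inequality.
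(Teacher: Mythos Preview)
Your approach is correct and is precisely the standard Brouwer/contraction argument the paper defers to (the paper itself gives no self-contained proof, only the reference to \cite{[CNCD]}). Two small points deserve tightening. First, the Brouwer lemma you quote requires $(\Phi(\xi),\xi)\ge 0$ with $\xi=(U,V)$ the actual unknown, but you pair with the \emph{affine} shift $(U^{n,\alpha},V^{n,\alpha})$; the fix is immediate---either reparametrise with $(U^{n,\alpha},V^{n,\alpha})$ as the variable (the map $U\mapsto U^{n,\alpha}$ is a bijection since $1-\tfrac{\alpha}{2}\ne 0$) or test directly with $(U,V)$. Second, the forcing contribution $(f_i(U^{n,\alpha},V^{n,\alpha}),U^{n,\alpha})$ is quadratic in $(U,V)$, not linear as you wrote; however its coefficient is $O(1)$ and is still dominated by the $\tau^{-\alpha}b_0(1-\tfrac{\alpha}{2})$ leading term for $\tau\le\tau^{*}$, so the conclusion stands. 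Your uniqueness step is correct; note only that invoking $\|\nabla U_2^{n,\alpha}\|\le C$ from Theorem~\ref{stability1 theorem3} tacitly assumes that the given $U^0,\dots,U^{n-1}$ are themselves solutions of the scheme at earlier levels (so that the discrete Gr\"onwall bound applies up to level $n$), which is exactly how the result is meant to be used inductively in the time-stepping.
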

\begin{proof} The proof follows in similar lines as given in of \cite[Theorem 1]{[CNCD]}.
\end{proof}
\section{\textit{A priori} error estimate}
In order to derive convergence estimate, we use Ritz projection operator \cite{[vth]} given by
\begin{equation}
\label{proj3}
(\nabla \phi,\nabla w)=( \nabla R_h \phi,\nabla w),\hspace{.5cm} \forall \phi \in H_0^1(\Omega),\ w \in X_h.
\end{equation}
In following Theorem, we state an approximation property for the operator $R_h$ which will be useful in the derivation of {\it{a priori}} error estimate.
\begin{theorem}\cite{[vth]}\label{projerr} There exists $C>0$, independent of $h$ such that
\begin{equation}\label{3e22}
\begin{split}
\|{\phi-R_h\phi} \|_{L^2(\Omega)} \, + h \, \|{\nabla (\phi-R_h\phi)}\|_{L^2(\Omega)} \, \le& \, Ch^2 \, \|{\Delta \phi}\|_{L^2(\Omega)}, \quad \forall \phi \in H^2(\Omega) \cap H^1_0(\Omega).\\
\end{split}
\end{equation}
\end{theorem}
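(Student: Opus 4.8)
The statement is the classical Ritz-projection error estimate (here attributed to \cite{[vth]}), so the plan is to reproduce the standard duality (Aubin--Nitsche) argument. Write $e := \phi - R_h\phi$. The defining relation \eqref{proj3} immediately gives the Galerkin orthogonality
\[
(\nabla e, \nabla w) = 0 \qquad \forall \, w \in X_h,
\]
which is the engine driving both the $H^1_0$ and the $L^2$ bounds. Throughout I would denote by $I_h\phi \in X_h$ the nodal (Lagrange) interpolant associated with the nodes of $\Omega_h$, and use the standard linear-interpolation estimates $\|\nabla(\phi - I_h\phi)\| \le C h \,\|\phi\|_2$ valid on the quasi-uniform mesh, together with the elliptic regularity bound $\|\phi\|_2 \le C\,\|\Delta\phi\|$ that holds for $\phi \in H^2(\Omega)\cap H^1_0(\Omega)$ on the smooth domain $\Omega$.

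First I would establish the energy (gradient) estimate. Since $R_h\phi$ is the orthogonal projection of $\phi$ onto $X_h$ in the inner product $(\nabla\cdot,\nabla\cdot)$, it is the best approximation in the gradient seminorm, so $\|\nabla e\| \le \|\nabla(\phi - \chi)\|$ for every $\chi \in X_h$. Choosing $\chi = I_h\phi$ and combining the interpolation and regularity estimates yields $\|\nabla e\| \le C h\,\|\phi\|_2 \le C h\,\|\Delta\phi\|$. Multiplying by $h$ controls the second term on the left-hand side of \eqref{3e22}.

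Next, for the $L^2$ bound I would run the duality trick. Introduce the auxiliary problem $-\Delta\psi = e$ in $\Omega$ with $\psi = 0$ on $\partial\Omega$; elliptic regularity gives $\|\psi\|_2 \le C\,\|e\|$. Integrating by parts (the boundary term vanishes because $e \in H^1_0(\Omega)$) and then inserting $I_h\psi$ and invoking Galerkin orthogonality, I obtain
\[
\|e\|^2 = (e, -\Delta\psi) = (\nabla e, \nabla\psi) = (\nabla e, \nabla(\psi - I_h\psi)) \le \|\nabla e\|\,\|\nabla(\psi - I_h\psi)\|.
\]
Applying the interpolation estimate to $\psi$ and the regularity bound $\|\psi\|_2 \le C\|e\|$ gives $\|e\|^2 \le C h\,\|\nabla e\|\,\|e\|$, hence $\|e\| \le C h\,\|\nabla e\|$. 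Combining this with the energy estimate $\|\nabla e\| \le C h\,\|\Delta\phi\|$ from the previous step produces $\|e\| \le C h^2\,\|\Delta\phi\|$, which completes the two-term bound \eqref{3e22}.

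The main obstacle is not the algebra but the two hidden ingredients on which the optimal $O(h^2)$ rate depends: the $H^2$ elliptic regularity estimates for both the primal function $\phi$ and the dual solution $\psi$, which require the smoothness of $\partial\Omega$ assumed in the model (on a non-convex or otherwise irregular domain the duality argument only yields a reduced exponent), and the quasi-uniformity/shape-regularity of $\Omega_h$ underlying the interpolation estimates. Given that the paper explicitly assumes a smooth boundary and a quasi-uniform partition, both ingredients are available, so these are assumptions to state rather than difficulties to overcome.
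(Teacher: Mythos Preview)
Your argument is correct and is precisely the classical Aubin--Nitsche duality proof of the Ritz-projection estimate. Note, however, that the paper does not give its own proof of this theorem: it is stated with a citation to Thom\'ee \cite{[vth]} and used as a black box, so there is no in-paper argument to compare against---your write-up simply supplies the standard proof from the cited reference.
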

Using the intermediate projection $R_h,$ we split the error into two parts as
\begin{equation*}
\begin{split}
u(x,t_n)-U^n=u^n-U^n=&(u^n-R_hu^n)+(R_hu^n-U^n)=\zeta_{1}^n+\chi_{1}^n,\\
v(x,t_n)-V^n=v^n-V^n=&(v^n-R_hv^n)+(R_hv^n-V^n)=\zeta_{2}^n+\chi_{2}^n.\\
\end{split}
\end{equation*}
\begin{theorem}\label{fully1 theorem3}
		Let  $(U^n,V^n)$ be the solution of fully-discrete scheme \eqref{fully discrete3}. Then under the sufficient regularity assumptions  on the solution $u, \ v\in C^3([0,T];L^2(\Omega))\cap C^2([0,T];H^2(\Omega) \cap H^1_0(\Omega))$, $\Big( \frac{\partial^i u}{\partial t^i}\Big)_{t=0} = \Big( \frac{\partial^i v}{\partial t^i}\Big)_{t=0} = 0,$ for $i=0,1,2$ of $(\ref{cuc3:1.1})$-$(\ref{cuc3:1.4}),$ there exists a positive constant  ${\tau}^{*}$ (independent of $h$) such that when $\tau\leq {\tau}^{*},$ the following estimates hold.
		\begin{equation}
		\begin{split}
		\|u^n-U^n\|+\|v^n-V^n\| \le& \, C \, \big(h^2 + \tau^2 \big),\\
		\|\nabla u^n - \nabla U^n\| + \| \nabla v^n - \nabla V^n\| \le& \, C \, \big(h + \tau^2 \big),
		\end{split}
		\end{equation}
		where  $n= 1,2,\dots,N.$ 
%		and $C$ is a positive constant independent of $h$ and $\tau.$
\end{theorem}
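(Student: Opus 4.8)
The plan is to keep the Ritz splitting $u^n-U^n=\zeta_1^n+\chi_1^n$, $v^n-V^n=\zeta_2^n+\chi_2^n$ already introduced, to bound the projection parts directly by Theorem~\ref{projerr} (so $\|\zeta_i^n\|\le Ch^2\|\Delta u^n\|$ and $\|\nabla\zeta_i^n\|\le Ch\|\Delta u^n\|$), and to derive both estimates for the finite element parts $\chi_i^n\in X_h$ from a single error equation tested against two different functions. First I would write the weak form of \eqref{cuc3:1.1}--\eqref{cuc3:1.2} at the shifted node $t_{n-\frac{\alpha}{2}}$, restrict $w\in X_h$, and subtract the fully-discrete scheme \eqref{fully discrete3}. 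The Caputo term splits as
\begin{equation*}
{}^{C}D^{\alpha}_{t_{n-\frac{\alpha}{2}}}u-{}^{C}D^{\alpha}_{\tau}U^{n}=\big({}^{C}D^{\alpha}_{t_{n-\frac{\alpha}{2}}}u-{}^{C}D^{\alpha}_{\tau}u^{n}\big)+{}^{C}D^{\alpha}_{\tau}\zeta_1^{n}+{}^{C}D^{\alpha}_{\tau}\chi_1^{n},
\end{equation*}
where the first bracket is $O(\tau^2)$ by Lemma~\ref{3l2}, and $\|{}^{C}D^{\alpha}_{\tau}\zeta_1^{n}\|\le Ch^2$ because $R_h$ commutes with the linear operator ${}^{C}D^{\alpha}_{\tau}=\tau^{-\alpha}\sum_j b_{n-j}(\cdot)^j$, so that ${}^{C}D^{\alpha}_{\tau}\zeta_1^n=(I-R_h)\,{}^{C}D^{\alpha}_{\tau}u^n$, to which Theorem~\ref{projerr} and the regularity hypotheses apply.

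Next I would treat the nonlocal elliptic term by inserting the weighted average $u^{n,\alpha}$: write $M_1(l(u^{n-\frac{\alpha}{2}}),l(v^{n-\frac{\alpha}{2}}))(\nabla u^{n-\frac{\alpha}{2}},\nabla w)-\widehat M_1(\nabla U^{n,\alpha},\nabla w)$, with $\widehat M_1:=M_1(l(U^{n,\alpha}),l(V^{n,\alpha}))$, as the sum of (i) a temporal consistency piece involving $\nabla(u^{n-\frac{\alpha}{2}}-u^{n,\alpha})$, which is $O(\tau^2)$ since the weights $1-\frac{\alpha}{2},\frac{\alpha}{2}$ place the abscissa exactly at $t_{n-\frac{\alpha}{2}}$ so the first-order Taylor terms cancel; (ii) a coefficient piece $(M_1(l(u^{n-\frac{\alpha}{2}}),l(v^{n-\frac{\alpha}{2}}))-\widehat M_1)(\nabla u^{n,\alpha},\nabla w)$, bounded through hypothesis H2 and $|l(\cdot)|\le C\|\cdot\|$ by $C(\tau^2+h^2+\|\chi_1^{n,\alpha}\|+\|\chi_2^{n,\alpha}\|)\|\nabla w\|$; and (iii) the principal piece $\widehat M_1(\nabla\chi_1^{n,\alpha},\nabla w)$, where the crucial cancellation $(\nabla\zeta_1^{n,\alpha},\nabla w)=0$ for $w\in X_h$ from the Ritz projection \eqref{proj3} has been used. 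The $f_1$ difference is handled by H3. This produces the error equation $({}^{C}D^{\alpha}_{\tau}\chi_1^n,w)+\widehat M_1(\nabla\chi_1^{n,\alpha},\nabla w)=(\mathcal R_1^n,w)+(\text{gradient-paired remainders})$, whose $L^2$-paired remainder $\mathcal R_1^n$ (truncation, $R_h$-time-derivative, and $f_1$ pieces) satisfies $\|\mathcal R_1^n\|\le C(\tau^2+h^2+\|\chi_1^{n,\alpha}\|+\|\chi_2^{n,\alpha}\|)$; an identical equation holds for $\chi_2^n$. The coupling through $M_i$ and $f_i$ is precisely why the two equations must be handled together.

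For the $L^2$ estimate I would test with $w=\chi_1^{n,\alpha}$, apply Lemma~\ref{3l1} to convert $({}^{C}D^{\alpha}_{\tau}\chi_1^n,\chi_1^{n,\alpha})$ into $\tfrac12\,{}^{C}D^{\alpha}_{\tau}\|\chi_1^n\|^2$, keep the coercive $m_1\|\nabla\chi_1^{n,\alpha}\|^2\le\widehat M_1\|\nabla\chi_1^{n,\alpha}\|^2$ on the left, and bound the right side by Cauchy--Schwarz and Young, absorbing the gradient factors of the two gradient-paired remainders into $m_1\|\nabla\chi_1^{n,\alpha}\|^2$ and using $\|\chi_1^{n,\alpha}\|^2\le C(\|\chi_1^n\|^2+\|\chi_1^{n-1}\|^2)$. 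Adding the $\chi_2$ inequality gives ${}^{C}D^{\alpha}_{\tau}\omega^n\le\lambda_1\omega^n+\lambda_2\omega^{n-1}+C(\tau^4+h^4)$ with $\omega^n=\|\chi_1^n\|^2+\|\chi_2^n\|^2$; since $\chi_i^0=0$, the discrete fractional Gr\"onwall inequality (Lemma~\ref{Dong}) yields $\|\chi_1^n\|+\|\chi_2^n\|\le C(\tau^2+h^2)$, and the triangle inequality with Theorem~\ref{projerr} completes the $L^2$ bound.

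For the $H^1$ bound I would test the same error equation with $w={}^{C}D^{\alpha}_{\tau}\chi_1^n$, divide by $\widehat M_1$ exactly as in Theorem~\ref{stability1 theorem3}, and apply Lemma~\ref{3l1} to the gradient field to obtain $\tfrac12\,{}^{C}D^{\alpha}_{\tau}\|\nabla\chi_1^n\|^2\le(\nabla\chi_1^{n,\alpha},\nabla\,{}^{C}D^{\alpha}_{\tau}\chi_1^n)$. I expect the main obstacle to be exactly here: the gradient-paired remainders from step two now generate $\nabla\,{}^{C}D^{\alpha}_{\tau}\chi_1^n$, a quantity that is not controlled by the left-hand side. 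I would remove it by integration by parts, replacing each $(\nabla\phi,\nabla\,{}^{C}D^{\alpha}_{\tau}\chi_1^n)$ with $(-\Delta\phi,\,{}^{C}D^{\alpha}_{\tau}\chi_1^n)$, which is legitimate because the regularity assumptions give $u^{n-\frac{\alpha}{2}}-u^{n,\alpha},\,u^{n,\alpha}\in H^2(\Omega)\cap H^1_0(\Omega)$ with $\|\Delta(u^{n-\frac{\alpha}{2}}-u^{n,\alpha})\|\le C\tau^2$; then $\|{}^{C}D^{\alpha}_{\tau}\chi_1^n\|^2$ is absorbed by Young. Because the surviving right-hand side involves only $\|\chi_i^{n,\alpha}\|^2$, already bounded by $C(\tau^4+h^4)$ from the $L^2$ step, Lemma~\ref{Dong} applied to $\|\nabla\chi_1^n\|^2+\|\nabla\chi_2^n\|^2$ gives $\|\nabla\chi_1^n\|+\|\nabla\chi_2^n\|\le C(\tau^2+h^2)$; combined with $\|\nabla\zeta_i^n\|\le Ch$ from Theorem~\ref{projerr}, the triangle inequality delivers the stated $C(h+\tau^2)$ estimate.
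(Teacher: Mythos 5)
Your proposal is correct and follows essentially the same route as the paper: the same Ritz splitting, the same error equation obtained by inserting $u^{n,\alpha}$ and $R_hu^{n,\alpha}$, the same test functions $\chi_1^{n,\alpha}$ and ${}^{C}D^{\alpha}_{\tau}\chi_1^{n}$, the same use of Lemma \ref{3l2} for the truncation error, Lemma \ref{3l1} for coercivity, hypotheses H1--H3 for the nonlocal and nonlinear terms, integration by parts onto $\Delta u$ in the $H^1$ step, and Lemma \ref{Dong} to close both estimates. The only (harmless) deviation is in the $H^1$ step, where you feed the already-proved $L^2$ bounds for $\|\chi_i^{n,\alpha}\|$ into the right-hand side and obtain $\|\nabla\chi_i^n\|\le C(h^2+\tau^2)$, whereas the paper converts those terms to gradient norms via Poincar\'e and settles for $C(h+\tau^2)$; both yield the stated estimate since the projection error $\|\nabla\zeta_i^n\|\le Ch$ dominates.
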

\begin{proof}
	For any $w \in X_h,$ we have
	\begin{equation}\label{3e23}
	\begin{split}
	&\big(\, ^C{D}^{\alpha}_{\tau} \chi_{1}^n, w \big) \, + \, M_1 \big(l(U^{n, \alpha}), l(V^{n, \alpha})\big) \, (\nabla \chi_{1}^{n, \alpha}, \nabla w) \qquad\\
	&\;= \, \big(\, ^C{D}^{\alpha}_{\tau} R_hu^n, w \big) + \, M_1 \big(l(U^{n, \alpha}), l(V^{n, \alpha})\big) \, (\nabla R_hu^{n, \alpha}, \nabla w) - \big(\, ^C{D}^{\alpha}_{\tau} U^n, w \big) \\
	&\qquad- \, M_1 \big(l(U^{n, \alpha}), l(V^{n, \alpha})\big) \, (\nabla U^{n, \alpha}, \nabla w).\\
	&\;= \, \big(\, ^C{D}^{\alpha}_{\tau} R_hu^n, w \big) + \, M_1 \big(l(U^{n, \alpha}), l(V^{n, \alpha})\big) \, (\nabla u^{n, \alpha}, \nabla w) - \big( f_{1}(U^{n, \alpha}, V^{n, \alpha}), w \big) \\
	& \qquad + \big( f_{1}(u^{n-\frac{\alpha}{2}}, v^{n-\frac{\alpha}{2}}), w \big) - \big( \, ^{C}{D}^{\alpha}_{t_{n-\frac{\alpha}{2}}}u, w \big) - M_1 \big(l(u^{n-\frac{\alpha}{2}}), l(v^{n-\frac{\alpha}{2}})\big) \, (\nabla u^{n-\frac{\alpha}{2}}, \nabla w) \\
	& \qquad + \, M_1 \big(l(u^{n-\frac{\alpha}{2}}), l(v^{n-\frac{\alpha}{2}})\big) \, (\nabla u^{n, \alpha}, \nabla w) -  M_1 \big(l(u^{n-\frac{\alpha}{2}}), l(v^{n-\frac{\alpha}{2}})\big) \, (\nabla u^{n, \alpha}, \nabla w)  \\
	&\;= \, \big(\, ^C{D}^{\alpha}_{\tau} R_hu^n - \, ^{C}{D}^{\alpha}_{t_{n-\frac{\alpha}{2}}}u , w \big) + M_1 \big(l(u^{n-\frac{\alpha}{2}}), l(v^{n-\frac{\alpha}{2}})\big) (\nabla u^{n, \alpha} - \nabla u^{n-\frac{\alpha}{2}}, \nabla w) \\
	& \qquad + \Big\{ M_1 \big(l(U^{n, \alpha}), l(V^{n, \alpha})\big) - M_1 \big(l(u^{n-\frac{\alpha}{2}}), l(v^{n-\frac{\alpha}{2}})\big) \Big\} (\nabla u^{n, \alpha}, \nabla w) \\
	& \qquad + \big( f_{1}(u^{n-\frac{\alpha}{2}}, v^{n-\frac{\alpha}{2}}) - f_{1}(U^{n, \alpha}, V^{n, \alpha}), w \big).
	\end{split}
	\end{equation}
	Setting $w=\chi_{1}^{n, \alpha}$ in $(\ref{3e23})$ to get
	\begin{equation}\label{3e24}
	\begin{split}
	  &\big(\, ^C{D}^{\alpha}_{\tau} \chi_{1}^n, \chi_{1}^{n, \alpha} \big) \, + \, M_1 \big(l(U^{n, \alpha}), l(V^{n, \alpha})\big) \, (\nabla \chi_{1}^{n, \alpha}, \nabla \chi_{1}^{n, \alpha}) \qquad\\
	  &\;= \, \big(\, ^C{D}^{\alpha}_{\tau} R_hu^n - \, ^{C}{D}^{\alpha}_{t_{n-\frac{\alpha}{2}}}u , \, \chi_{1}^{n, \alpha} \big) + M_1 \big(l(u^{n-\frac{\alpha}{2}}), l(v^{n-\frac{\alpha}{2}})\big) (\nabla u^{n, \alpha} - \nabla u^{n-\frac{\alpha}{2}}, \nabla \chi_{1}^{n, \alpha}) \\
	  & \qquad + \Big\{ M_1 \big(l(U^{n, \alpha}), l(V^{n, \alpha})\big) - M_1 \big(l(u^{n-\frac{\alpha}{2}}), l(v^{n-\frac{\alpha}{2}})\big) \Big\} (\nabla u^{n, \alpha}, \nabla \chi_{1}^{n, \alpha}) \\
	  & \qquad + \big( f_{1}(u^{n-\frac{\alpha}{2}}, v^{n-\frac{\alpha}{2}}) - f_{1}(U^{n, \alpha}, V^{n, \alpha}), \, \chi_{1}^{n, \alpha} \big).
	\end{split}
	\end{equation}
	Also, from the assumptions on solution $u$ and $v$, we can find $R_u, \, R_v \, >0$ such that
	\begin{equation}\label{3e27}
	\begin{split}
	\|u^{n, \alpha}  \|_{ H^2(\Omega)} \le R_u, \quad \| v^{n, \alpha} \|_{ H^2(\Omega)} \le R_v.
	\end{split}
	\end{equation}
	Using the bound of $M_1$, \eqref{3e27} and Cauchy-Schwarz inequality in \eqref{3e24}, we get
	\begin{equation}\label{3e25}
	\begin{split}
	&\big(\, ^C{D}^{\alpha}_{\tau} \chi_{1}^n, \chi_{1}^{n, \alpha} \big) \, + m_1 \, \| \nabla \chi_{1}^{n, \alpha} \|^2 \qquad\\
	&\; \le \, \| \, ^C{D}^{\alpha}_{\tau} R_hu^n - \, ^{C}{D}^{\alpha}_{t_{n-\frac{\alpha}{2}}}u \| \, \| \chi_{1}^{n, \alpha} \| + m_2 \, \| \nabla u^{n, \alpha} - \nabla u^{n-\frac{\alpha}{2}} \| \, \| \nabla \chi_{1}^{n, \alpha} \| \\
	& \qquad + R_u \, \big| M_1 \big(l(U^{n, \alpha}), l(V^{n, \alpha})\big) - M_1 \big(l(u^{n-\frac{\alpha}{2}}), l(v^{n-\frac{\alpha}{2}})\big) \big| \,  \|\nabla \chi_{1}^{n, \alpha} \| \\
	& \qquad + \big\| f_{1}(u^{n-\frac{\alpha}{2}}, v^{n-\frac{\alpha}{2}}) - f_{1}(U^{n, \alpha}, V^{n, \alpha}) \big\| \, \| \chi_{1}^{n, \alpha} \|.
	\end{split}
	\end{equation}
	Applying Poincar$\acute{e}$ inequality and the inequality $ab \le \frac{\epsilon}{2} a^2 + \frac{1}{2 \epsilon} b^2$ $\big(\mbox{with} \; \epsilon = \frac{4}{m_1} \big)$ to \eqref{3e25}, we can obtain
	\begin{equation}\label{3e26}
	\begin{split}
	&\big(\, ^C{D}^{\alpha}_{\tau} \chi_{1}^n, \chi_{1}^{n, \alpha} \big) \, + m_1 \, \| \nabla \chi_{1}^{n, \alpha} \|^2 \qquad\\
	&\; \le \frac{2}{m_1} \, \| \, ^C{D}^{\alpha}_{\tau} R_hu^n - \, ^{C}{D}^{\alpha}_{t_{n-\frac{\alpha}{2}}}u \|^2 + \frac{2}{m_1} \, \big\| f_{1}(u^{n-\frac{\alpha}{2}}, v^{n-\frac{\alpha}{2}}) - f_{1}(U^{n, \alpha}, V^{n, \alpha}) \big\|^2 \\
	& \qquad + \frac{2 R_u^2}{m_1} \, \big| M_1 \big(l(U^{n, \alpha}), l(V^{n, \alpha})\big) - M_1 \big(l(u^{n-\frac{\alpha}{2}}), l(v^{n-\frac{\alpha}{2}})\big) \big|^2 \\
	& \qquad + \frac{2 \, m_2^2}{m_1} \, \| \nabla u^{n, \alpha} - \nabla u^{n-\frac{\alpha}{2}} \|^2 + \frac{m_1}{2} \, \| \nabla \chi_{1}^{n, \alpha} \|^2.
	\end{split}
	\end{equation}
	Lipschitz continuity of functions $M_1$ and $f_1$ gives
	\begin{equation}\label{3e28}
	\begin{split}
	&\big| M_1 \big(l(U^{n, \alpha}), l(V^{n, \alpha})\big) - M_1 \big(l(u^{n-\frac{\alpha}{2}}), l(v^{n-\frac{\alpha}{2}})\big) \big|\\
	 &\le L'_1 \, \big|l(U^{n, \alpha}) - l(u^{n-\frac{\alpha}{2}}) \big| + K'_1 \, \big|l(V^{n, \alpha}) - l(v^{n-\frac{\alpha}{2}}) \big|\\
	&\le L'_1 C\, \|U^{n, \alpha} - u^{n-\frac{\alpha}{2}}\| + K'_1 C\, \|V^{n, \alpha} - v^{n-\frac{\alpha}{2}}\|\\
%	&\le L'_1 C\, \big\{ \| U^{n, \alpha} - u^{n, \alpha}\| + \|u^{n, \alpha} - u^{n-\frac{\alpha}{2}}\| \big\} + K'_1 C\, \big\{ \| V^{n, \alpha} - v^{n, \alpha}\| + \|v^{n, \alpha} - v^{n-\frac{\alpha}{2}}\| \big\} \\
%	&\le L'_1 C \, \big\{ \| \zeta_{1}^{n, \alpha}\| + \|\chi_{1}^{n, \alpha}\| + \|u^{n, \alpha} - u^{n-\frac{\alpha}{2}}\| \big\} + K'_1 C\, \big\{ \| \zeta_{2}^{n, \alpha}\| + \|\chi_{2}^{n, \alpha}\| + \|v^{n, \alpha} - v^{n-\frac{\alpha}{2}}\| \big\}\\
	&\le C \, \big\{ \| \zeta_{1}^{n, \alpha}\| + \|\chi_{1}^{n, \alpha}\| + \| \zeta_{2}^{n, \alpha}\| + \|\chi_{2}^{n, \alpha}\| + \|u^{n, \alpha} - u^{n-\frac{\alpha}{2}}\| + \|v^{n, \alpha} - v^{n-\frac{\alpha}{2}}\| \big\}
	\end{split}
	\end{equation}
	and
	\begin{equation}\label{3e29}
	\begin{split}
	&\big\| f_{1}(u^{n-\frac{\alpha}{2}}, v^{n-\frac{\alpha}{2}}) - f_{1}(U^{n, \alpha}, V^{n, \alpha}) \big\| \\
	&\le L_1 C\, \|U^{n, \alpha} - u^{n-\frac{\alpha}{2}}\| + K_1 C\, \|V^{n, \alpha} - v^{n-\frac{\alpha}{2}}\| \\
	&\le C \, \big\{ \| \zeta_{1}^{n, \alpha}\| + \|\chi_{1}^{n, \alpha}\| + \| \zeta_{2}^{n, \alpha}\| + \|\chi_{2}^{n, \alpha}\| + \|u^{n, \alpha} - u^{n-\frac{\alpha}{2}}\| + \|v^{n, \alpha} - v^{n-\frac{\alpha}{2}}\| \big\}.
	\end{split}
	\end{equation}
Thus, from equations \eqref{3e26}-\eqref{3e29} and Lemma \ref{3l1}, we have	
\begin{equation}\label{3e30}
\begin{split}
  ^C{D}^{\alpha}_{\tau} \| \chi_{1}^n \|^2 \le& \, C \, \big\{  \| \, ^C{D}^{\alpha}_{\tau} R_hu^n - \, ^{C}{D}^{\alpha}_{t_{n-\frac{\alpha}{2}}}u \|^2 + \| \nabla u^{n, \alpha} - \nabla u^{n-\frac{\alpha}{2}} \|^2 + \| \zeta_{1}^{n, \alpha}\|^2  \\
  &+ \|\chi_{1}^{n, \alpha}\|^2 + \| \zeta_{2}^{n, \alpha}\|^2 + \|\chi_{2}^{n, \alpha}\|^2 + \|u^{n, \alpha} - u^{n-\frac{\alpha}{2}}\|^2 + \|v^{n, \alpha} - v^{n-\frac{\alpha}{2}}\|^2 \big\},
\end{split}
\end{equation}
where $C$ is dependent on $m_1,$ $m_2,$ $R_u,$ $L'_1,$ $K'_1,$ $L_1,$ $K_1.$ \\
Now, it follows from Taylor's theorem that
\begin{equation}\label{3e31}
\begin{split}
  \|u^{n, \alpha} - u^{n-\frac{\alpha}{2}}\| \, \le& \, \frac{\alpha}{2} \Big( 1- \frac{\alpha}{2} \Big) \, \tau \int_{t_{n-1}}^{t_n} \| u_{tt} (\eta)\| \, d\eta \, \le \, C \, \tau^2.
\end{split}
\end{equation}
Similarly, 
\begin{equation}\label{3e32}
\begin{split}
\|v^{n, \alpha} - v^{n-\frac{\alpha}{2}}\| \, \le \, C \, \tau^2, \qquad \|\nabla u^{n, \alpha} - \nabla u^{n-\frac{\alpha}{2}}\| \, \le \, C \, \tau^2.
\end{split}
\end{equation}
From Lemma \ref{3l2} and \eqref{3e22}, we have
\begin{equation}\label{3e33}
\begin{split}
  \| \, ^C{D}^{\alpha}_{\tau} R_hu^n - \, ^{C}{D}^{\alpha}_{t_{n-\frac{\alpha}{2}}}u \| \le& \| \, ^C{D}^{\alpha}_{\tau} R_hu^n - \, ^{C}{D}^{\alpha}_{t_{n-\frac{\alpha}{2}}}R_hu \| + \| \, ^{C}{D}^{\alpha}_{t_{n-\frac{\alpha}{2}}}R_hu - \, ^{C}{D}^{\alpha}_{t_{n-\frac{\alpha}{2}}}u \| \\
  \le& \, C \, \big( \tau^2 + h^2 \big).
\end{split}
\end{equation}
Using \eqref{3e22} and \eqref{3e31}-\eqref{3e33} in \eqref{3e30}, we get
\begin{equation}\label{3e34}
\begin{split}
  ^C{D}^{\alpha}_{\tau} \| \chi_{1}^n \|^2 \le& \, C \big(\|\chi_{1}^{n, \alpha}\|^2 + \|\chi_{2}^{n, \alpha}\|^2 + \big( h^2 + \tau^2 \big)^2 \big).
\end{split}
\end{equation}
Similarly, we can get an estimate for $\|\chi_{2}^{n, \alpha}\|$ as follows:
\begin{equation}\label{3e35}
\begin{split}
^C{D}^{\alpha}_{\tau} \| \chi_{2}^n \|^2 \le& \, C \big(\|\chi_{1}^{n, \alpha}\|^2 + \|\chi_{2}^{n, \alpha}\|^2 + \big( h^2 + \tau^2 \big)^2 \big).
\end{split}
\end{equation}
Therefore, from \eqref{3e34} and \eqref{3e35}  
\begin{equation}\label{3e36}
\begin{split}
 ^C{D}^{\alpha}_{\tau} \big( \| \chi_{1}^n \|^2 + \| \chi_{2}^n \|^2 \big) \le& \, C \Big( 1 - \frac{\alpha}{2}\Big)^2 \big(\|\chi_{1}^n \|^2 + \|\chi_{2}^n \|^2 \big) + \frac{C \, \alpha^2}{4} \big(\|\chi_{1}^{n-1} \|^2 + \|\chi_{2}^{n-1} \|^2 \big) \\
 &+ C \, \big( h^2 + \tau^2 \big)^2.
\end{split}
\end{equation}
An application of Lemma $\ref{Dong}$ $\Big(\mbox{with} \; \lambda_1 = C \, \big( 1 - \frac{\alpha}{2}\big)^2,$ $\lambda_2 = \frac{C \, \alpha^2}{4},$ $\phi^n = C \, ( h^2 + \tau^2 )^2\Big)$ gives
\begin{equation}\label{3e37}
\begin{split}
  \| \chi_{1}^n \|^2 + \| \chi_{2}^n \|^2  \le& \, C \, \big( h^2 + \tau^2 \big)^2,
\end{split}
\end{equation}
where $C$ is dependent on $\lambda_1, \lambda_2, \alpha, T.$\\
Therefore, 
\begin{equation}\label{3e37a}
\begin{split}
\| \chi_{1}^n \| + \| \chi_{2}^n \| \le& \, C \, \big( h^2 + \tau^2 \big).
\end{split}
\end{equation}
Finally, applying the triangle inequality, we can get
 \begin{equation}\label{3e37b}
 \begin{split}
   \|u^n-U^n\| + \|v^n-V^n\| \, \le& \, \| \zeta_{1}^n \| + \| \chi_{1}^n \| + \| \zeta_{2}^n \| + \| \chi_{2}^n \| \\
   \le& \, C \, \big( h^2 + \tau^2 \big).
 \end{split}
 \end{equation}
\\
In order to derive an error estimate in $H^1_0(\Omega)$ norm, we rewrite \eqref{3e23} as
\begin{equation}\label{3e38}
\begin{split}
&\big(\, ^C{D}^{\alpha}_{\tau} \chi_{1}^n, w \big) \, + \, M_1 \big(l(U^{n, \alpha}), l(V^{n, \alpha})\big) \, (\nabla \chi_{1}^{n, \alpha}, \nabla w) \qquad\\
&\;= \, \big(\, ^C{D}^{\alpha}_{\tau} R_hu^n - \, ^{C}{D}^{\alpha}_{t_{n-\frac{\alpha}{2}}}u , w \big) + M_1 \big(l(u^{n-\frac{\alpha}{2}}), l(v^{n-\frac{\alpha}{2}})\big) (\Delta u^{n, \alpha} - \Delta u^{n-\frac{\alpha}{2}}, w) \\
& \qquad + \Big\{ M_1 \big(l(U^{n, \alpha}), l(V^{n, \alpha})\big) - M_1 \big(l(u^{n-\frac{\alpha}{2}}), l(v^{n-\frac{\alpha}{2}})\big) \Big\} (\Delta u^{n, \alpha}, w) \\
& \qquad + \big( f_{1}(u^{n-\frac{\alpha}{2}}, v^{n-\frac{\alpha}{2}}) - f_{1}(U^{n, \alpha}, V^{n, \alpha}), w \big).
\end{split}
\end{equation}
We choose $w = \, ^C{D}^{\alpha}_{\tau} \chi_{1}^n$ in \eqref{3e38} to get
\begin{equation}\label{3e38a}
\begin{split}
&\big(\, ^C{D}^{\alpha}_{\tau} \chi_{1}^n, ^C{D}^{\alpha}_{\tau} \chi_{1}^n \big) \, + \, M_1 \big(l(U^{n, \alpha}), l(V^{n, \alpha})\big) \, \big(\nabla \chi_{1}^{n, \alpha}, \nabla ^C{D}^{\alpha}_{\tau} \chi_{1}^n\big) \qquad\\
&\;= \, \big(\, ^C{D}^{\alpha}_{\tau} R_hu^n - \, ^{C}{D}^{\alpha}_{t_{n-\frac{\alpha}{2}}}u , \, ^C{D}^{\alpha}_{\tau} \chi_{1}^n \big) + \big( f_{1}(u^{n-\frac{\alpha}{2}}, v^{n-\frac{\alpha}{2}}) - f_{1}(U^{n, \alpha}, V^{n, \alpha}), \, ^C{D}^{\alpha}_{\tau} \chi_{1}^n \big) \\
& \qquad + \Big\{ M_1 \big(l(U^{n, \alpha}), l(V^{n, \alpha})\big) - M_1 \big(l(u^{n-\frac{\alpha}{2}}), l(v^{n-\frac{\alpha}{2}})\big) \Big\} \big(\Delta u^{n, \alpha}, \, ^C{D}^{\alpha}_{\tau} \chi_{1}^n\big) \\
& \qquad + M_1 \big(l(u^{n-\frac{\alpha}{2}}), l(v^{n-\frac{\alpha}{2}})\big) \big(\Delta u^{n, \alpha} - \Delta u^{n-\frac{\alpha}{2}}, \,  ^C{D}^{\alpha}_{\tau} \chi_{1}^n\big).
\end{split}
\end{equation}
Now, dividing \eqref{3e38a} by $M_1 \big(l(U^{n, \alpha}), l(V^{n, \alpha})\big)$ and then using bound of $M_1$, Cauchy-Schwarz inequality, we can arrive at 
\begin{equation}\label{3e39}
\begin{split}
& \frac{1}{m_2} \, \big\| \, ^C{D}^{\alpha}_{\tau} \chi_{1}^n \big\|^2 + \big(\nabla \chi_{1}^{n, \alpha}, \nabla \, ^C{D}^{\alpha}_{\tau} \chi_{1}^n\big) \qquad\\
&\; \le \frac{1}{m_1} \, \big\| \, ^C{D}^{\alpha}_{\tau} R_hu^n - \, ^{C}{D}^{\alpha}_{t_{n-\frac{\alpha}{2}}}u \big\| \, \big\| \, ^C{D}^{\alpha}_{\tau} \chi_{1}^n \big\| + \frac{m_2}{m_1} \, \big\|\Delta u^{n, \alpha} - \Delta u^{n-\frac{\alpha}{2}}\big\| \, \big\| \, ^C{D}^{\alpha}_{\tau} \chi_{1}^n \big\| \\
& \qquad + \frac{1}{m_1} \, \big| M_1 \big(l(U^{n, \alpha}), l(V^{n, \alpha})\big) - M_1 \big(l(u^{n-\frac{\alpha}{2}}), l(v^{n-\frac{\alpha}{2}})\big) \big| \, \big\| \Delta u^{n, \alpha} \big\| \, \big\| \, ^C{D}^{\alpha}_{\tau} \chi_{1}^n \big\| \\
& \qquad + \frac{1}{m_1} \, \big\| f_{1}(u^{n-\frac{\alpha}{2}}, v^{n-\frac{\alpha}{2}}) - f_{1}(U^{n, \alpha},  V^{n, \alpha}) \big\| \, \big\| \, ^C{D}^{\alpha}_{\tau} \chi_{1}^n \big\|.
\end{split}
\end{equation}
Further for $a,b>0,$ using the inequality $ab\leq \frac{a^2}{2\epsilon}+ \frac{\epsilon b^2}{2},$ with $\epsilon =4 \, m_2$ and from \eqref{3e27}, we have
\begin{equation}\label{3e40}
\begin{split}
& \frac{1}{m_2} \, \big\| \, ^C{D}^{\alpha}_{\tau} \chi_{1}^n \big\|^2 + \big(\nabla \chi_{1}^{n, \alpha}, \nabla \, ^C{D}^{\alpha}_{\tau} \chi_{1}^n\big) \qquad\\
&\; \le \frac{2 \, m_2}{m_1^2} \, \big\| \, ^C{D}^{\alpha}_{\tau} R_hu^n - \, ^{C}{D}^{\alpha}_{t_{n-\frac{\alpha}{2}}}u \big\|^2 + \frac{2 \, m_2^2}{m_1^2} \, \big\|\Delta u^{n, \alpha} - \Delta u^{n-\frac{\alpha}{2}}\big\|^2 \\
& \qquad + \frac{2 R_u^2\, m_2}{m_1^2} \, \big| M_1 \big(l(U^{n, \alpha}), l(V^{n, \alpha})\big) - M_1 \big(l(u^{n-\frac{\alpha}{2}}), l(v^{n-\frac{\alpha}{2}})\big) \big|^2  \\
& \qquad + \frac{2 \, m_2}{m_1^2} \, \big\| f_{1}(u^{n-\frac{\alpha}{2}}, v^{n-\frac{\alpha}{2}}) - f_{1}(U^{n, \alpha},  V^{n, \alpha}) \big\|^2 + \frac{1}{2 \, m_2} \, \big\| \, ^C{D}^{\alpha}_{\tau} \chi_{1}^n \big\|^2.
\end{split}
\end{equation}
Using \eqref{3e28} and \eqref{3e29} along with Poincar\'e inequality in \eqref{3e40}, we get 
\begin{equation}\label{3e41}
\begin{split}
\big(\nabla \chi_{1}^{n, \alpha}, \nabla \, ^C{D}^{\alpha}_{\tau} \chi_{1}^n\big) \; \le   C \, \Big\{ \big\| \, ^C{D}^{\alpha}_{\tau} R_hu^n - \, ^{C}{D}^{\alpha}_{t_{n-\frac{\alpha}{2}}}u \big\|^2 + \big\|\Delta u^{n, \alpha} - \Delta u^{n-\frac{\alpha}{2}}\big\|^2& \\
  + \| \nabla \zeta_{1}^{n, \alpha}\|^2 + \| \nabla \chi_{1}^{n, \alpha}\|^2 + \| \nabla \zeta_{2}^{n, \alpha}\|^2 + \| \nabla \chi_{2}^{n, \alpha}\|^2& \\
+ \| u^{n, \alpha} - u^{n-\frac{\alpha}{2}}\|^2 + \|v^{n, \alpha} - v^{n-\frac{\alpha}{2}}\|^2& \Big\},
\end{split}
\end{equation}
where $C$ is dependent on $m_1,$ $m_2,$ $L'_1,$ $K'_1,$ $L_1,$ $K_1,$ $R_u.$\\
Also, from Taylor's theorem, we have
\begin{equation}\label{3e42}
\begin{split}
\|\Delta u^{n, \alpha} - \Delta u^{n-\frac{\alpha}{2}}\| \, \le \, C \, \tau^2.
\end{split}
\end{equation}
Using Lemma \ref{3l1}, \eqref{3e22}, \eqref{3e31}-\eqref{3e33} and \eqref{3e42} in \eqref{3e41}, we can obtain
\begin{equation}\label{3e43}
\begin{split}
^C{D}^{\alpha}_{\tau} \| \nabla \chi_{1}^n \|^2 \le& \, C \big(\| \nabla \chi_{1}^{n, \alpha}\|^2 + \| \nabla \chi_{2}^{n, \alpha}\|^2 + \big( h + \tau^2 \big)^2 \big).
\end{split}
\end{equation}
Similarly, one can obtain an estimate for $\| \nabla \chi_{2}^{n, \alpha}\|$ as follows:
\begin{equation}\label{3e44}
\begin{split}
^C{D}^{\alpha}_{\tau} \| \nabla \chi_{2}^n \|^2 \le& \, C \big(\| \nabla \chi_{1}^{n, \alpha}\|^2 + \| \nabla \chi_{2}^{n, \alpha}\|^2 + \big( h + \tau^2 \big)^2 \big).
\end{split}
\end{equation}
Adding \eqref{3e43} and \eqref{3e44}, we get 
\begin{equation}\label{3e45}
\begin{split}
^C{D}^{\alpha}_{\tau} \big( \| \nabla \chi_{1}^n \|^2 + \| \nabla \chi_{2}^n \|^2 \big) \le& \, C \, \big( h + \tau^2 \big)^2 + C \Big( 1 - \frac{\alpha}{2}\Big)^2 \big(\| \nabla \chi_{1}^n \|^2 + \| \nabla \chi_{2}^n \|^2 \big) \\
&+ \frac{C \, \alpha^2}{4} \big(\| \nabla \chi_{1}^{n-1} \|^2 + \| \nabla \chi_{2}^{n-1} \|^2 \big). \\
\end{split}
\end{equation}
Applying Lemma $\ref{Dong}$ $\Big(\mbox{with} \; \lambda_1 = C \, \big( 1 - \frac{\alpha}{2}\big)^2,$ $\lambda_2 = \frac{C \, \alpha^2}{4},$ $\phi^n = C \, ( h + \tau^2 )^2\Big)$, we can get
\begin{equation}\label{3e46}
\begin{split}
\| \nabla \chi_{1}^n \|^2 + \| \nabla \chi_{2}^n \|^2  \le& \, C \, \big( h + \tau^2 \big)^2,
\end{split}
\end{equation}
where $C$ is dependent on $\lambda_1, \lambda_2, \alpha, T.$\\
%Therefore, 
%\begin{equation}\label{3e47a}
%\begin{split}
%\| \nabla \chi_{1}^n \| + \| \nabla \chi_{2}^n \| \le& \, C \, \big( h + \tau^2 \big).
%\end{split}
%\end{equation}
Finally, we use the Triangle inequality together with the estimates \eqref{3e46}, \eqref{3e22} to obtain
\begin{equation}\label{3e47b}
\begin{split}
\|\nabla u^n - \nabla U^n\| + \| \nabla v^n - \nabla V^n\| \, \le& \, \| \nabla \zeta_{1}^n \| + \| \nabla \chi_{1}^n \| + \| \nabla \zeta_{2}^n \| + \| \nabla \chi_{2}^n \| \\
\le& \, C \, \big( h + \tau^2 \big).
\end{split}
\end{equation} 
This completes the proof.
\end{proof}
\begin{remark}
		In present work, the convergence of the proposed scheme is proved without considering the weak singularity of the solution. We leave the convergence analysis of weak singularity case as a future work.
\end{remark}
\section{Numerical experiments}
In this section, we perform some numerical experiments by considering two different problems with known exact solutions. In both problems, we take the final time $T=1$ and tolerance $\epsilon=10^{-7}$ for stopping Newton's iteration. We denote the number of sub-intervals in time by $N$. Moreover, let $(M_s+1)$ be the number of node points in each spatial direction. In order to obtain the order of convergence in spatial direction in $L^2(\Omega)$ and $H^1_0(\Omega)$ norms, we take $N= M_s$ for different values of $M_s$. Similarly, to calculate the convergence rate in temporal direction in $L^2(\Omega)$ norm, we take $ M_s=N$ for different values of $N$.\\
\begin{example}\label{3E1}
For our first example, in \eqref{3e4}, we consider the spatial domain $\Omega = (0, 1)$, $M_1(z,w)=3+\sin z + \cos w,$ $M_2(z,w)=5+\cos z + \sin w.$ The functions $f_1$ and $f_2$ are chosen such that the analytical solutions of the equation \eqref{3e4} be  $u(x,t)= t^{2+ \alpha } \sin 2 \pi x$ and $v(x,t)= t^{3- \alpha } \sin \pi x$. 	
\end{example}
Error and convergence rate in the spatial direction in $L^2(\Omega)$ and $H^1_0(\Omega)$ norms are given in Tables \ref{3t1} and \ref{3t2}, respectively. Furthermore, Table \ref{3t3} shows errors and convergence rates in the temporal direction in $L^2(\Omega)$ norm.
%%%
\begin{table}[h!]
	\centering
	\begin{tabular}{|c|c|c|c|c|c|}
		\hline
		& $M_s$  & \multicolumn{1}{c|}{$\|u^n-U^n\|$} & \multicolumn{1}{c|}{Rate} & \multicolumn{1}{c|}{$\|v^n-V^n\|$} & \multicolumn{1}{c|}{Rate} \\
		\hline
	    & $2^6$ &  7.17E-04   & 1.999438471  & 1.60E-04 & 2.000114366 \\
		\multicolumn{1}{|l|}{$\alpha=0.4$} & $2^7$ & 1.79E-04 & 1.999759291 & 3.99E-05 & 2.000074895 \\
		& $2^8$ & 4.48E-05  & 1.99989373  & 9.98E-06 & 2.000038509 \\
		& $2^9$ & 1.12E-05 & - &  2.49E-06 & - \\
		\hline
		& $2^6$ & 7.21E-04 & 1.999524396 &  1.58E-04 & 2.000010343 \\
		\multicolumn{1}{|l|}{$\alpha=0.7$} & $2^7$ & 1.80E-04 & 1.999830784  & 3.95E-05  & 2.00001483 \\
		& $2^8$ & 4.51E-05 & 1.999928113 &  9.89E-06  & 2.000000769 \\
		& $2^9$ & 1.13E-05 & - & 2.47E-06  & - \\
		\hline 				
	\end{tabular}
    \caption{(Example-\ref{3E1}) \emph{Error and convergence rate in spatial direction in $L^2(\Omega)$ norm.}}
	\label{3t1}
\end{table}
%%%
\vspace{15cm}
\begin{table}[h!]
	\centering
	\begin{tabular}{|c|c|c|c|c|c|}
		\hline
		& $M_s$  & \multicolumn{1}{c|}{$\|u^n-U^n\|$} & \multicolumn{1}{c|}{Rate} & \multicolumn{1}{c|}{$\|v^n-V^n\|$} & \multicolumn{1}{c|}{Rate} \\
		\hline
		& $2^6$ & 1.26E-01 & 0.999784124 & 3.15E-02 & 0.99994853 \\
		\multicolumn{1}{|l|}{$\alpha=0.4$} & $2^7$ & 6.30E-02 & 0.999946033 & 1.57E-02 & 0.999987136 \\
		& $2^8$ & 3.15E-02 & 0.999986509 & 7.87E-03 &  0.999996784 \\
		& $2^9$ & 1.57E-02 & - & 3.93E-03 & - \\
		\hline
		& $2^6$ & 1.26E-01 & 0.999784473 & 3.15E-02 & 0.999948918 \\
		\multicolumn{1}{|l|}{$\alpha=0.7$} & $2^7$ & 6.30E-02 & 0.99994612 & 1.57E-02 & 0.999987232 \\
		& $2^8$ & 3.15E-02 &  0.99998653 & 7.87E-03 & 0.999996808  \\
		& $2^9$ & 1.57E-02 &  - & 3.93E-03 & - \\
		\hline			
	\end{tabular}
	\caption{(Example-\ref{3E1}) \emph{Error and convergence rate in spatial direction in $H^1_0(\Omega)$ norm.}}
	\label{3t2}
\end{table}

\begin{table}[h!]
	\centering
	\begin{tabular}{|c|c|c|c|c|c|}
		\hline
		& $N$  & \multicolumn{1}{c|}{$\|u^n-U^n\|$} & \multicolumn{1}{c|}{Rate} & \multicolumn{1}{c|}{$\|v^n-V^n\|$} & \multicolumn{1}{c|}{Rate} \\
		\hline
		& $2^6$ & 7.17E-04 & 1.999438471 & 1.60E-04 & 2.000114366 \\
		\multicolumn{1}{|l|}{$\alpha=0.4$} & $2^7$ & 1.79E-04 & 1.999759291 & 3.99E-05 & 2.000074895 \\
		& $2^8$ & 4.48E-05 & 1.99989373 & 9.98E-06 & 2.000038509 \\
		& $2^9$ & 1.12E-05 & - & 2.49E-06 & - \\
		\hline
		& $2^6$ & 7.21E-04 & 1.999524396 & 1.58E-04 & 2.000010343 \\
		\multicolumn{1}{|l|}{$\alpha=0.7$} & $2^7$ & 1.80E-04 & 1.999830784 & 3.95E-05 & 2.00001483 \\
		& $2^8$ & 4.51E-05 & 1.999928113 & 9.89E-06 & 2.000000769 \\
		& $2^9$ & 1.13E-05 & - & 2.47E-06 & - \\
		\hline	
	\end{tabular}
	\caption{(Example-\ref{3E1}) \emph{Error and convergence rate in temporal direction in $L^2(\Omega)$ norm.}}
	\label{3t3}
\end{table}
%%%
%\newpage
\begin{example}\label{3E2}
		In this example, we take $\Omega = (0, 1) \times (0,1)$, $M_1(z,w)=3+\sin z + \cos w,$ $M_2(z,w)=5+\cos z + \sin w.$ We choose $f_1$ and $f_2$ such that the analytical solution of equation \eqref{3e4} be $u(x,y,t)= t^3 \sin 2 \pi x \, \sin 2 \pi y$ and $v(x,y,t)= t^4 \sin \pi x \, \sin \pi y$. 	
\end{example}
Error and convergence rate in the spatial direction in $L^2(\Omega)$ and $H^1_0(\Omega)$ norms are given in Tables \ref{3t4} and \ref{3t5}, respectively. Table \ref{3t6} shows errors and convergence rates in the temporal direction in $L^2(\Omega)$ norm. For $\alpha = 0.5,$ the graphs of exact and numerical solutions are shown in Figures \ref{3fig1} and \ref{3fig2}.   
%%%
\begin{table}[h!]
	\centering
	\begin{tabular}{|c|c|c|c|c|c|}
		\hline
		& $M_s$  & \multicolumn{1}{c|}{$\|u^n-U^n\|$} & \multicolumn{1}{c|}{Rate} & \multicolumn{1}{c|}{$\|v^n-V^n\|$} & \multicolumn{1}{c|}{Rate} \\
		\hline
		& $2^3$ & 8.76E-02 & 1.910229823  & 2.05E-02 & 1.983174711 \\
		\multicolumn{1}{|l|}{$\alpha=0.5$} & $2^4$ & 2.33E-02 & 1.976537254 & 5.18E-03 & 1.996676573 \\
		& $2^5$ & 5.92E-03 & 1.993876169  & 1.30E-03 & 1.999788928 \\
		& $2^6$ & 1.49E-03 & - & 3.24E-04 & - \\
		\hline
		& $2^3$ & 8.77E-02 & 1.910535849 & 1.99E-02 & 1.981811644 \\
		\multicolumn{1}{|l|}{$\alpha=0.9$} & $2^4$ & 2.33E-02 & 1.976942804 & 5.04E-03 & 1.995308014 \\
		& $2^5$ & 5.92E-03 & 1.994131588 & 1.26E-03 & 1.998953039 \\
		& $2^6$ & 1.49E-03 & - & 3.16E-04 & - \\
		\hline	
	\end{tabular}
	\caption{(Example-\ref{3E2}) \emph{Error and convergence rate in spatial direction in $L^2(\Omega)$ norm.}}
	\label{3t4}
\end{table}
\vspace{5cm}
\begin{table}[h!]
	\centering
	\begin{tabular}{|c|c|c|c|c|c|}
		\hline
		& $M_s$  & \multicolumn{1}{c|}{$\|u^n-U^n\|$} & \multicolumn{1}{c|}{Rate} & \multicolumn{1}{c|}{$\|v^n-V^n\|$} & \multicolumn{1}{c|}{Rate} \\
		\hline
		& $2^3$ & 1.28E+00 & 0.986482547 & 3.24E-01 & 0.99590991 \\
		\multicolumn{1}{|l|}{$\alpha=0.5$} & $2^4$ & 6.48E-01 & 0.996624338 & 1.62E-01 & 0.99896682 \\
		& $2^5$ & 3.25E-01 & 0.999158027  & 8.13E-02 & 0.999740748 \\
		& $2^6$ & 1.63E-01 & - & 4.06E-02 & - \\
		\hline
		& $2^3$ & 1.28E+00 & 0.986646867 & 3.24E-01 & 0.995636078 \\
		\multicolumn{1}{|l|}{$\alpha=0.9$} & $2^4$ & 6.48E-01 & 0.996659575 & 1.62E-01 & 0.998904181 \\
		& $2^5$ & 3.25E-01 & 0.999164853 & 8.13E-02 & 0.999725974 \\
		& $2^6$ & 1.63E-01 & - & 4.06E-02 & - \\
		\hline 			
	\end{tabular}
	\caption{(Example-\ref{3E2}) \emph{Error and convergence rate in spatial direction in $H^1_0(\Omega)$ norm.}}
	\label{3t5}
\end{table}
%\vspace{5cm}
\begin{table}[h!]
	\centering
	\begin{tabular}{|c|c|c|c|c|c|}
		\hline
		& $N$  & \multicolumn{1}{c|}{$\|u^n-U^n\|$} & \multicolumn{1}{c|}{Rate} & \multicolumn{1}{c|}{$\|v^n-V^n\|$} & \multicolumn{1}{c|}{Rate} \\
		\hline
		& $2^3$ & 8.76E-02 &  1.910229823 & 2.05E-02 & 1.983174711 \\
		\multicolumn{1}{|l|}{$\alpha=0.5$} & $2^4$ & 2.33E-02 & 1.976537254 & 5.18E-03 & 1.996676573 \\
		& $2^5$ & 5.92E-03 & 1.993876169 & 1.30E-03 & 1.999788928 \\
		& $2^6$ & 1.49E-03 & - & 3.24E-04 & - \\
		\hline
		& $2^3$ & 8.77E-02 & 1.910535849 & 1.99E-02 & 1.981811644 \\
		\multicolumn{1}{|l|}{$\alpha=0.9$} & $2^4$ & 2.33E-02 & 1.976942804 & 5.04E-03 & 1.995308014 \\
		& $2^5$ & 5.92E-03 & 1.994131588 & 1.26E-03 & 1.998953039 \\
		& $2^6$ & 1.49E-03 & - & 3.16E-04 & - \\
		\hline 		
	\end{tabular}
	\caption{(Example-\ref{3E2}) \emph{Error and convergence rate in temporal direction in $L^2(\Omega)$ norm.}}
	\label{3t6}
\end{table}
%%%
\begin{figure}[h!]
	\begin{center}
		\includegraphics[width=15cm]{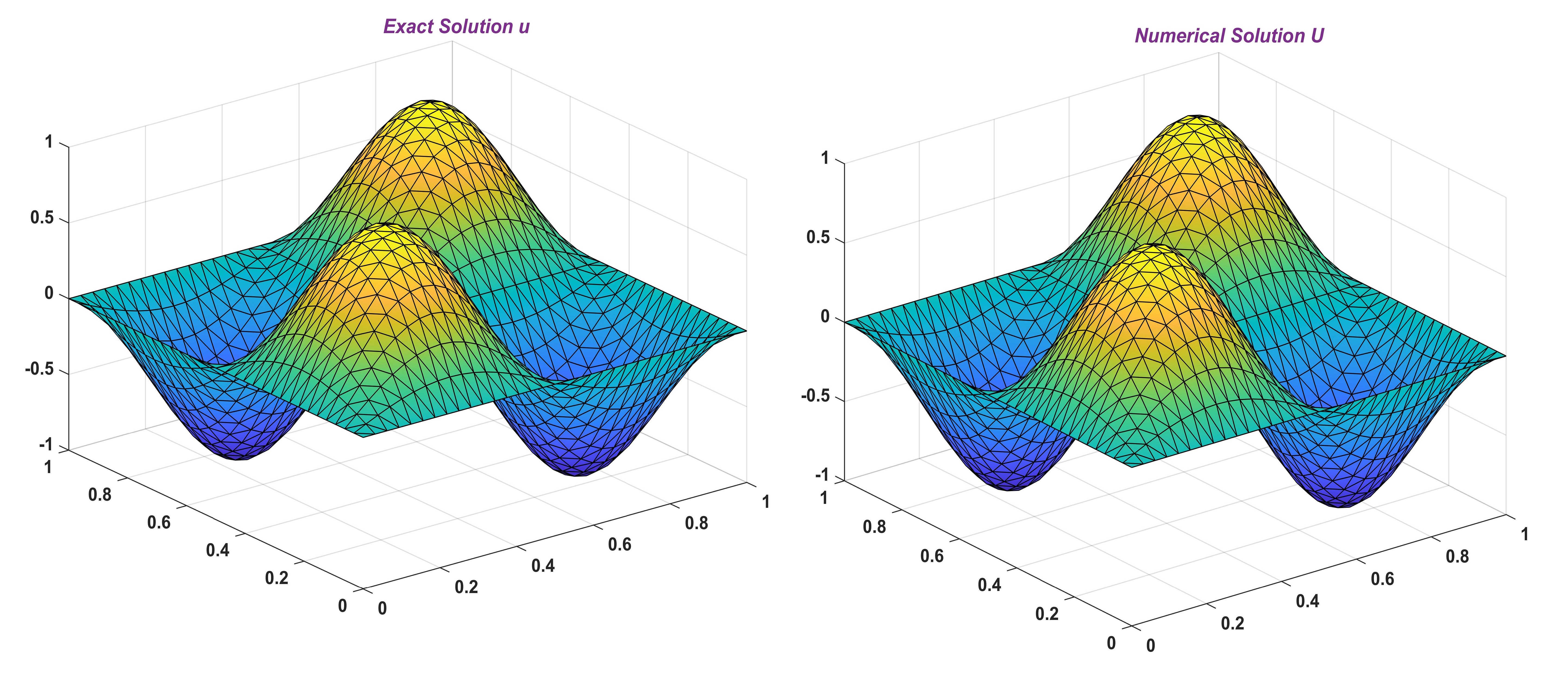}
	\end{center}
	\caption{(Example-\ref{3E2}) \emph{The exact solution $u$ and numerical solution $U$ for  $\alpha =0.5$.}}
	\label{3fig1}
\end{figure}
\vspace{5cm}
\begin{figure}[h!]
	\begin{center}
		\includegraphics[width=15cm]{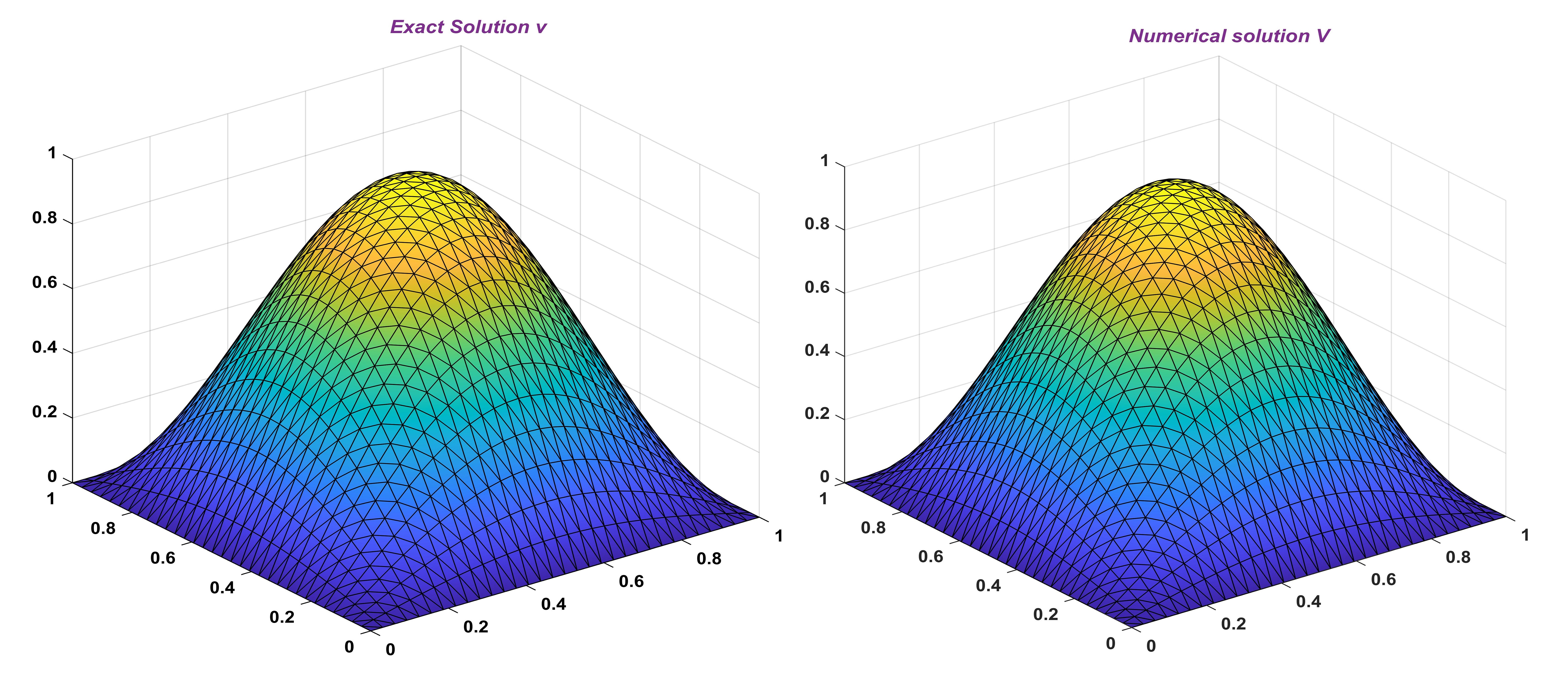}
	\end{center}
	\caption{(Example-\ref{3E2}) \emph{The exact solution $v$ and numerical solution $V$ for $\alpha =0.5$.}}
	\label{3fig2}
\end{figure}
%%% 
%\section{Acknowledgment}
%The author would like to sincerely thank Dr. Sudhakar Chaudhary for the valuable comments and suggestions.\\
%\vspace{3cm}
\newpage
\noindent \textbf{Declarations:}\\
\textbf{Conflict of interest-} The author declares no competing interests.

%%%%%%%%%%%%%%%%%%%%%%%%%%%%%%%%%%%%%%%%%%%%%%%%%%%%%%%%%%%%%%%%%
%%%%%%%%%%%%%
	
%%%%%%%%
\end{document}